\documentclass[11pt,reqno]{amsart}
\usepackage{color}

\usepackage{amsfonts,amscd}
\usepackage{amssymb}
\usepackage{url}
\usepackage[english]{babel}

\theoremstyle{plain}
\newtheorem{theorem}                 {Theorem}      [section]
\newtheorem{conjecture}   [theorem]  {Conjecture}

\newtheorem{lemma}        [theorem]  {Lemma}
\newtheorem{proposition}  [theorem]  {Proposition}

\theoremstyle{definition}

\newtheorem{definition}   [theorem]  {Definition}

\newtheorem{example}      [theorem]  {Example}

\newtheorem{remark}       [theorem]  {Remark}

\numberwithin{equation}{section}

\def \cn{{\mathbb C}}

\def \rn{{\mathbb R}}

\def \zn{{\mathbb Z}}

\def \B{\mathcal B}

\def\nab#1#2{\hbox{$\nabla$\kern -.3em\lower 1.0 ex
		\hbox{$#1$}\kern -.1 em {$#2$}}}

\def \SL2{\widetilde{\text{\bf SL}}_{2}(\rn)}

\DeclareMathOperator{\Div}{div}

\numberwithin{equation}{section}
\allowdisplaybreaks

\begin{document}

\title[$(p,q)$-harmonic morphisms]{Complex-valued $(p,q)$-harmonic morphisms\\ from Riemannian manifolds}

%\dedicatory{2.0005 \today}
\dedicatory{* The corresponding author}

\author{Elsa Ghandour}
\address{Mathematics, Faculty of Science\\
	University of Lund\\
	Box 118, Lund 221 00\\
	Sweden}
\email{Elsa.Ghandour@math.lu.se}

\author{Sigmundur Gudmundsson*}
\address{Mathematics, Faculty of Science\\
	University of Lund\\
	Box 118, Lund 221 00\\
	Sweden}
\email{Sigmundur.Gudmundsson@math.lu.se}

\begin{abstract}
We introduce the natural notion of $(p,q)$-harmonic morphisms between Riemannian manifolds.  This unifies several theories that have been studied during the last decades.  We then study the special case when the maps involved are complex-valued.  For these we find a  characterisation and provide new non-trivial examples in important cases.
\end{abstract}

\subjclass[2010]{53C43, 58E20}

\keywords{harmonic morphisms}

\maketitle

\section{Introduction}\label{section-introduction}

The history of {\it harmonic morphisms} can be traced back to the pioneering work \cite{Jac} of Jacobi from 1848.  He studies complex-valued functions pulling back harmonic functions in the complex plane $\cn$ to harmonic functions in the $3$-dimensional Euclidean space $\rn^3$.  The notion is then generalised to the Riemannian setting in the late 1970s, independently by Fuglede and Ishihara, see \cite{Fug} and \cite{Ish}.  This has led to a lively development that can be followed both in \cite{Bai-Woo-book} and at the regularly updated on-line bibliography \cite{Gud-bib}.

Loubeau and Ou study {\it biharmonic morphisms} between Riemannian manifolds, see \cite{Lou-Ou-1} and \cite{Lou-Ou-2}.  These are maps pulling back biharmonic functions to biharmonic functions.  In his work \cite{Mae}, Maeta introduces the notion of {\it triharmonic morphisms}.  These are mappings pulling back triharmonic functions to triharmonic functions.

Recently, Ghandour and Ou introduce the notion of {\it generalised harmonic morphisms} between Riemannian manifolds, see \cite{Gha-thesis} and \cite{Gha-Ou-1}.  These are maps pulling back harmonic functions to biharmonic functions.  They also find a characterisation for these non-linear objects.
\medskip

In this work we unify the above notions by defining the {\it $(p,q)$-harmonic morphisms}.  These are maps between Riemannian manifolds pulling back $q$-harmonic functions to $p$-harmonic  functions.  Just as the harmonic morphisms and their above mentioned variants, they are solutions to an over-determined system of {\it non-linear} partial differential equations.  This means that they have no general existence theory.  For this reason it is interesting to develop methods for constructing solutions in particular cases.

In this paper we focus our attention on complex-valued $(p,q)$-harmonic morphisms from Riemannian manifolds. The aim is to extend the known characterisation to this case and to manufacture new non-trivial examples to this non-linear problem.  The explicit examples presented here involve rather demanding computations.  They were all tested, by the computer algebra systems Maple and Mathematica, independently.

\section{Preliminaries}\label{section-preliminaries}

Let $(M,g)$ be an $m$-dimensional  Riemannian manifold and $T^{\cn}M$ be the complexification of the tangent bundle $TM$ of $M$. We extend the metric $g$ to a complex-bilinear form on $T^{\cn}M$.  Then the gradient $\nabla z$ of a complex-valued function $z:(M,g)\to\cn$ is a section of $T^{\cn}M$.  In this situation, the well-known complex linear {\it Laplace-Beltrami operator} (alt. {\it tension field}) $\tau$ on $(M,g)$ acts locally on $z$ as follows
$$
\tau(z)=\Div (\nabla z)=\sum_{i,j=1}^m\frac{1}{\sqrt{|g|}} \frac{\partial}{\partial x_j}
\left(g^{ij}\, \sqrt{|g|}\, \frac{\partial z}{\partial x_i}\right).
$$
For two complex-valued functions $z,w:(M,g)\to\cn$ we have the following well-known relation
\begin{equation}\label{equation-basic}
\tau(z\cdot w)=\tau(z)\cdot w +2\cdot\kappa(z,w)+z\cdot\tau(w),
\end{equation}
where the complex bilinear {\it conformality operator} $\kappa$ is given by $\kappa(z,w)=g(\nabla z,\nabla w)$.  Locally this satisfies 
$$\kappa(z,w)=\sum_{i,j=1}^mg^{ij}\cdot\frac{\partial z}{\partial x_i}\frac{\partial w}{\partial x_j}.$$
For the naming of the operator $\kappa$, we have the following.
\begin{remark}
Note that for a complex-valued function $z=x+i\,y:(M,g)\to\cn$ we have 
$$\kappa(z,z)=(|\nabla x|^2-|\nabla y|^2)+2i\, g(\nabla x,\nabla y)=0$$
if and only if the two gradients $\nabla x$ and $\nabla y$ are orthogonal and of the same length at every point $p\in M$ i.e. $z$ is {\it horizontally conformal}, see \cite{Bai-Woo-book}.
\end{remark}

As a direct consequence of the complex linearity and bi-linearity of the operators $\tau$ and $\kappa$, respectively, we have the following.

\begin{lemma}\label{lemma-conjugate}
Let $(M,g)$ be a Riemannian manifold and $z,w:(M,g)\to\cn$ be two complex-valued functions.  Then the tension field $\tau$ and the conformality operator $\kappa$ satisfy
\begin{equation}
\overline{\tau(z)}=\tau(\bar z)\ \ \text{and}\ \ \overline{\kappa(z,w)}=\kappa(\bar z,\bar w).
\end{equation}	 
\end{lemma}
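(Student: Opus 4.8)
The plan is to prove the two identities separately, both by reducing to the local coordinate expressions and exploiting that complex conjugation is a ring homomorphism on $\cn$ that fixes the real-valued data of the Riemannian structure. The key observation throughout is that the metric coefficients $g^{ij}$, the volume density $\sqrt{|g|}$, and the coordinate vector fields $\partial/\partial x_i$ are all real, so conjugation commutes with each of them and interacts only with the complex-valued functions $z$ and $w$.

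First I would treat the tension field. Starting from the local formula
$$
\tau(z)=\sum_{i,j=1}^m\frac{1}{\sqrt{|g|}} \frac{\partial}{\partial x_j}
\left(g^{ij}\, \sqrt{|g|}\, \frac{\partial z}{\partial x_i}\right),
$$
I would apply complex conjugation to both sides. Since $\sqrt{|g|}$ and $g^{ij}$ are real and the partial derivative operators have real coefficients, conjugation passes through the sum and the differentiations, turning $\partial z/\partial x_i$ into $\partial \bar z/\partial x_i$. This yields exactly the local expression for $\tau(\bar z)$, giving $\overline{\tau(z)}=\tau(\bar z)$. Equivalently, one can argue invariantly: $\tau$ is a real differential operator applied to a complex function, so splitting $z=x+i\,y$ gives $\tau(z)=\tau(x)+i\,\tau(y)$ with $\tau(x),\tau(y)$ real, whence $\overline{\tau(z)}=\tau(x)-i\,\tau(y)=\tau(\bar z)$.

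Next I would handle the conformality operator in the same spirit, using
$$
\kappa(z,w)=\sum_{i,j=1}^m g^{ij}\cdot\frac{\partial z}{\partial x_i}\frac{\partial w}{\partial x_j}.
$$
Conjugating and again using that the $g^{ij}$ are real, conjugation distributes over the product and the sum, sending $\partial z/\partial x_i$ to $\partial \bar z/\partial x_i$ and $\partial w/\partial x_j$ to $\partial \bar w/\partial x_j$, which is precisely $\kappa(\bar z,\bar w)$. This completes the second identity.

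I do not expect any genuine obstacle here: the statement is, as the text notes, ``a direct consequence'' of complex linearity of $\tau$ and complex bi-linearity of $\kappa$ together with the reality of the Riemannian data. The only point requiring a word of care is making explicit that the geometric ingredients $g^{ij}$ and $\sqrt{|g|}$ are real-valued, so that conjugation is inert on them and acts solely on the function entries; once this is flagged, both identities drop out immediately from the local formulas.
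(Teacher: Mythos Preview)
Your argument is correct and matches the paper's approach: the paper does not give a formal proof but simply remarks that the lemma is ``a direct consequence of the complex linearity and bi-linearity of the operators $\tau$ and $\kappa$, respectively,'' which is exactly what your real/imaginary-part splitting and your local-coordinate computation make explicit.
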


We are now ready to define the complex-valued proper $p$-harmonic functions, the main objects of our study.

\begin{definition}\label{definition-proper-r-harmonic}
For a positive integer $p$, the iterated Laplace-Beltrami operator $\tau^p$ is given by
$$\tau^{0} (z)=z\ \ \text{and}\ \ \tau^p (z)=\tau(\tau^{(p-1)}(z)).$$	We say that a complex-valued function $z:(M,g)\to\cn$ is
\begin{enumerate}
\item[(a)] {\it $p$-harmonic} if $\tau^p (z)=0$, and
\item[(b)] {\it proper $p$-harmonic} if $\tau^p (z)=0$ and $\tau^{(p-1)} (z)$ does not vanish identically.
	\end{enumerate}
\end{definition}

We now introduce the natural notion of a $(p,q)$-harmonic morphism.  For $(p,q)=(1,1)$ this is the classical case of harmonic morphisms introduced by Fuglede and Ishihara, in \cite{Fug} and \cite{Ish}, independently.

\begin{definition}\label{definition-pq}
A map $\phi:(M,g)\to (N,h)$ between Riemannian manifolds is said to be a {\it $(p,q)$-harmonic morphism} if, for any {\it $q$-harmonic} function $f:U\subset N\to\rn$, defined on an open subset $U$ such that $\phi^{-1}(U)$ is not empty, the composition $f\circ\phi:\phi^{-1}(U)\subset M\to\rn$ is  {\it $p$-harmonic}.
\end{definition}

As an immediate consequence of Definition \ref{definition-pq} we have the following natural composition law.

\begin{lemma}\label{lemma-composition}
Let $\phi:(M,g)\to (\bar N,\bar h)$ be a $(p,r)$-harmonic morphism between Riemannian manifolds.  If $\psi:(\bar N,\bar h)\to (N,h)$ is an $(r,q)$-harmonic morphism then  the composition $\psi\circ\phi:(M,g)\to (N,h)$ is a $(p,q)$-harmonic morphism.
\end{lemma}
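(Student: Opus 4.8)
The plan is to prove this purely formally, directly from Definition \ref{definition-pq}, by chasing a single test function through the two maps. The argument is a diagram chase, and the only subtlety is keeping track of the open sets on which the intermediate functions are defined and ensuring that the required non-emptiness conditions propagate.

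First I would fix an arbitrary $q$-harmonic function $f:U\subset N\to\rn$, with $U$ open and $(\psi\circ\phi)^{-1}(U)$ non-empty, say containing a point $x\in M$. Since $\psi$ is continuous, the preimage $V:=\psi^{-1}(U)\subset\bar N$ is open, and it contains $\phi(x)$, hence is non-empty. Applying the hypothesis that $\psi$ is an $(r,q)$-harmonic morphism to the $q$-harmonic function $f$, I obtain that the composition $f\circ\psi:V\to\rn$ is $r$-harmonic.

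Next I would feed this $r$-harmonic function into $\phi$. The preimage $\phi^{-1}(V)\subset M$ contains $x$ and is therefore non-empty, so the hypothesis that $\phi$ is a $(p,r)$-harmonic morphism yields that $(f\circ\psi)\circ\phi:\phi^{-1}(V)\to\rn$ is $p$-harmonic. Finally, the elementary identities $\phi^{-1}(\psi^{-1}(U))=(\psi\circ\phi)^{-1}(U)$ and $(f\circ\psi)\circ\phi=f\circ(\psi\circ\phi)$ show that $f\circ(\psi\circ\phi)$ is $p$-harmonic on $(\psi\circ\phi)^{-1}(U)$. Since $f$ was an arbitrary $q$-harmonic function, this is precisely the assertion that $\psi\circ\phi$ is a $(p,q)$-harmonic morphism. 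Because each half of the composition is invoked strictly according to its definition, there is no genuine analytic obstacle here; the entire content is the correct threading of the domains, and in particular the observation that the openness and non-emptiness of $\psi^{-1}(U)$ are exactly what permit the second application of Definition \ref{definition-pq}.
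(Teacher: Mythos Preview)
Your argument is correct and is exactly the direct unfolding of Definition~\ref{definition-pq} that the paper has in mind; the paper in fact gives no written proof, stating only that the lemma is ``an immediate consequence'' of the definition. You have simply made explicit the domain bookkeeping (openness and non-emptiness of $\psi^{-1}(U)$ and $\phi^{-1}(\psi^{-1}(U))$) that the paper leaves to the reader.
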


Another useful consequence of Definition \ref{definition-pq} is the following.

\begin{lemma}\label{lemma-higher-lower}
Let $\phi:(M,g)\to (N,h)$ be a $(p,q)$-harmonic morphism between Riemannian manifolds.  Then $\phi$ is a $(p,q^\prime)$-harmonic morphism for $q>q^\prime$ and is a $(p^\prime,q)$-harmonic morphism for $p^\prime >p$.
\end{lemma}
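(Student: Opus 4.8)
The plan is to reduce everything to one elementary monotonicity observation about the iterated Laplace--Beltrami operator and then invoke the defining property from Definition \ref{definition-pq}. The key point, immediate from Definition \ref{definition-proper-r-harmonic}, is that if a function $f$ satisfies $\tau^k(f)=0$ then it also satisfies $\tau^{k'}(f)=0$ for every integer $k'\ge k$, since $\tau^{k'}(f)=\tau^{k'-k}(\tau^k(f))=\tau^{k'-k}(0)=0$. In other words, being $k$-harmonic forces being $k'$-harmonic whenever $k'\ge k$, so the class of $k$-harmonic functions grows with $k$. The whole proof consists of applying this once on the target and once on the domain.

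For the first assertion I would fix $q'<q$ and let $f:U\subset N\to\rn$ be an arbitrary $q'$-harmonic function defined on an open set with $\phi^{-1}(U)\neq\emptyset$. By the monotonicity above, $f$ is automatically $q$-harmonic. Since $\phi$ is assumed to be a $(p,q)$-harmonic morphism, Definition \ref{definition-pq} guarantees that the composition $f\circ\phi$ is $p$-harmonic on $\phi^{-1}(U)$. As $f$ was an arbitrary $q'$-harmonic function, this is exactly the statement that $\phi$ is a $(p,q')$-harmonic morphism.

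For the second assertion I would fix $p'>p$ and let $f:U\subset N\to\rn$ be an arbitrary $q$-harmonic function on an open set meeting the image of $\phi$. Because $\phi$ is a $(p,q)$-harmonic morphism, the composition $f\circ\phi$ is $p$-harmonic, that is $\tau^p(f\circ\phi)=0$. Applying the monotonicity observation now on the domain $M$, I conclude $\tau^{p'}(f\circ\phi)=0$, so $f\circ\phi$ is $p'$-harmonic. Hence $\phi$ pulls back every $q$-harmonic function to a $p'$-harmonic function, which is precisely the assertion that $\phi$ is a $(p',q)$-harmonic morphism.

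There is no genuine obstacle here; the entire content is the monotonicity of the harmonicity condition in the iteration parameter. The only point worth stating with care is that this monotonicity is used on the \emph{target} side for the first claim, where it enlarges the family of admissible test functions, and on the \emph{domain} side for the second claim, where it weakens the required conclusion. This asymmetry simply mirrors the asymmetric roles played by $q$ and $p$ in Definition \ref{definition-pq}.
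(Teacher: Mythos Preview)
Your proof is correct and is precisely the intended argument: the paper does not spell out a proof of this lemma at all, remarking only that it is a consequence of Definition~\ref{definition-pq}, and your monotonicity observation together with its two applications (on the target for decreasing $q$, on the domain for increasing $p$) is exactly what makes that consequence explicit.
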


In our Theorem \ref{theorem-(p,q)}, we show that a complex-valued  $(p,q)$-harmonic morphism is a $p$-harmonic function and for this situation we define the following.

\begin{definition}A complex-valued $(p,q)$-harmonic morphism  $z:(M,g)\to\mathbb{C}$ from a Riemannian manifold is said to be {\it proper} if it is proper as a $p$-harmonic function i.e. $\tau^p(z)=0$ and $\tau^{p-1}(z)\neq 0$.
\end{definition}

\section{Complex-valued $(2,q)$-Harmonic Morphisms}

Throughout this work we assume that $z:(M,g)\to\cn$ is a smooth complex-valued function on a Riemannian manifold and that $f:U\to\cn$ is differentiable and defined on an open subset $U$ of $\cn$ containing the image $z(M)$ of $z$.  Further let $\phi:(M,g)\to\cn$ be the composition $\phi=f\circ z$.  For this situation we have the following result that later will be employed several times.  This is easily obtained by using the chain rule.

\begin{lemma}\label{lemma-fundamental}
Let $z:(M,g)\to\cn$ be a complex-valued function on a Riemannian manifold and $F,G:U\to\cn$ be differentiable functions defined on an open subset $U$ of $\cn$ containing the image $z(M)$ of $z$. Then the tension field $\tau$ and the conformality operator $\kappa$ satisfy
\begin{eqnarray*}
\tau(F(z,\bar z))&=&\frac{\partial F}{\partial z}\cdot\tau(z)+\frac{\partial F}{\partial \bar z}\cdot\tau(\bar z)\\
& &+\frac{\partial^2 F}{\partial z^2}\cdot\kappa(z,z)+
2\,\frac{\partial^2 F}{\partial z\partial\bar z}\cdot\kappa(z,\bar z)+\frac{\partial^2 F}{\partial \bar z^2}\cdot\kappa(\bar z,\bar z).
\end{eqnarray*}
and 
\begin{equation*}
\kappa(F(z,\bar z),G(z,\bar z))=\frac{\partial F}{\partial z}\cdot\kappa(z,G(z,\bar z))+\frac{\partial F}{\partial \bar z}\cdot\kappa (\bar z,G(z,\bar z)).
\end{equation*}
\end{lemma}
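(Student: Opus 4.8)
The plan is to reduce both identities to a single gradient chain rule combined with the algebraic properties of $\tau$ and $\kappa$ recorded above. The key preliminary observation is that, by the Wirtinger chain rule applied to the composition $F(z,\bar z)$, differentiating with respect to each local coordinate $x_i$ and collecting terms gives the gradient decomposition
\begin{equation*}
\nabla\,F(z,\bar z)=\frac{\partial F}{\partial z}\cdot\nabla z+\frac{\partial F}{\partial\bar z}\cdot\nabla\bar z,
\end{equation*}
where the partial derivatives are evaluated at $(z,\bar z)$. This uses only that $\partial_{x_i}$ obeys the ordinary chain rule and that $g$ has been extended $\cn$-bilinearly to $T^{\cn}M$.

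For the conformality operator, recall that $\kappa(u,v)=g(\nabla u,\nabla v)$ is symmetric and $\cn$-bilinear. Substituting the gradient decomposition of $F(z,\bar z)$ into $\kappa(F(z,\bar z),G(z,\bar z))=g(\nabla F,\nabla G)$ and using bilinearity in the first slot immediately yields
\begin{equation*}
\kappa(F(z,\bar z),G(z,\bar z))=\frac{\partial F}{\partial z}\cdot g(\nabla z,\nabla G)+\frac{\partial F}{\partial\bar z}\cdot g(\nabla\bar z,\nabla G)=\frac{\partial F}{\partial z}\cdot\kappa(z,G(z,\bar z))+\frac{\partial F}{\partial\bar z}\cdot\kappa(\bar z,G(z,\bar z)),
\end{equation*}
which is the second claimed formula.

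For the tension field I would write $\tau(F(z,\bar z))=\Div(\nabla F)=\Div\bigl(\tfrac{\partial F}{\partial z}\nabla z+\tfrac{\partial F}{\partial\bar z}\nabla\bar z\bigr)$ and apply the product rule $\Div(\varphi X)=\varphi\,\Div X+g(\nabla\varphi,X)$ to each summand. The two divergence terms produce $\tfrac{\partial F}{\partial z}\tau(z)+\tfrac{\partial F}{\partial\bar z}\tau(\bar z)$. To the remaining inner-product terms I again apply the gradient chain rule, now to $\tfrac{\partial F}{\partial z}$ and $\tfrac{\partial F}{\partial\bar z}$, and pair the results against $\nabla z$ and $\nabla\bar z$ through $g$; this produces the four quadratic terms
\begin{equation*}
\frac{\partial^2 F}{\partial z^2}\cdot\kappa(z,z)+\frac{\partial^2 F}{\partial\bar z\partial z}\cdot\kappa(\bar z,z)+\frac{\partial^2 F}{\partial z\partial\bar z}\cdot\kappa(z,\bar z)+\frac{\partial^2 F}{\partial\bar z^2}\cdot\kappa(\bar z,\bar z).
\end{equation*}
Using the symmetry $\kappa(z,\bar z)=\kappa(\bar z,z)$ together with the equality of mixed Wirtinger derivatives, the two middle terms collapse into $2\,\tfrac{\partial^2 F}{\partial z\partial\bar z}\kappa(z,\bar z)$, giving the first claimed formula.

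There is no genuine difficulty here; the statement is essentially a bookkeeping exercise in the Wirtinger calculus, as the paper itself indicates. The only points deserving care are the justification of the gradient chain rule in the complexified setting (which works precisely because $g$ is extended $\cn$-bilinearly) and the appeal to equality of mixed partials, for which one should assume $F$ is of class $C^2$, as is implicit in the later iterated use of $\tau$. One could alternatively bypass the divergence product rule and deduce the $\tau$-identity directly from the multiplicative relation \eqref{equation-basic}, but the divergence route is the most transparent.
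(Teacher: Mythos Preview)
Your argument is correct and is exactly the chain-rule computation the paper has in mind; the paper itself does not spell out a proof beyond remarking that the lemma ``is easily obtained by using the chain rule.'' Your write-up simply makes that remark explicit via the Wirtinger gradient decomposition and the divergence product rule, which is the natural way to do it.
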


As a direct consequence of Lemma \ref{lemma-fundamental}, we now see that the tension field $\tau(\phi)$ of the composition $\phi=f\circ z$ is given by
\begin{equation}\label{equation-tau1}
\begin{aligned}
\tau(\phi)&=\frac{\partial f}{\partial z}\cdot\tau(z)+\frac{\partial f}{\partial \bar z}\cdot\tau(\bar z) \\
&\qquad +\frac{\partial^2 f}{\partial z^2}\cdot\kappa(z,z)+
2\,\frac{\partial^2 f}{\partial z\partial\bar z}\cdot\kappa(z,\bar z)+\frac{\partial^2 f}{\partial \bar z^2}\cdot\kappa(\bar z,\bar z).
\end{aligned}
\end{equation}

For the completeness of our exposition we now state the following.  This  recovers the classical result of Fuglede and Ishihara in our special case of complex-valued functions. 

\begin{theorem}\label{theorem-(1,1)}
A complex-valued function $z:(M,g)\to\cn$ from a Riemannian manifold is a $(1,1)$-harmonic morphism if and only if $$\kappa(z,z)=0\ \ \text{and}\ \ \tau(z)=0.$$
\end{theorem}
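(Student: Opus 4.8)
The plan is to reduce everything to the chain-rule formula \eqref{equation-tau1} for $\tau(f\circ z)$, combined with the classical description of harmonic functions on the plane. Writing $w$ for the standard coordinate on the target $\cn$, a real-valued function $f$ is harmonic there precisely when $\partial^2 f/\partial w\,\partial\bar w=0$, since the Euclidean Laplacian equals $4\,\partial^2/\partial w\,\partial\bar w$. Substituting this into \eqref{equation-tau1} annihilates the mixed middle term, so that for every harmonic test function $f$ one has
$$\tau(f\circ z)=\frac{\partial f}{\partial w}\,\tau(z)+\frac{\partial f}{\partial \bar w}\,\tau(\bar z)+\frac{\partial^2 f}{\partial w^2}\,\kappa(z,z)+\frac{\partial^2 f}{\partial \bar w^2}\,\kappa(\bar z,\bar z),$$
with every derivative of $f$ evaluated along $z$.

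First I would dispatch the ``if'' direction, which is immediate. Assuming $\tau(z)=0$ and $\kappa(z,z)=0$, Lemma \ref{lemma-conjugate} gives $\tau(\bar z)=\overline{\tau(z)}=0$ and $\kappa(\bar z,\bar z)=\overline{\kappa(z,z)}=0$, so every term in the displayed formula vanishes and $f\circ z$ is harmonic for all harmonic $f$; hence $z$ is a $(1,1)$-harmonic morphism.

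For the ``only if'' direction I would probe the morphism condition with four globally harmonic functions on $\cn$. Taking $f=\Re(w)$ and $f=\Im(w)$, whose second derivatives vanish, the formula collapses, via Lemma \ref{lemma-conjugate}, to $\tau(f\circ z)=\Re(\tau(z))$ and $\tau(f\circ z)=\Im(\tau(z))$ respectively; since the morphism property forces each $\tau(f\circ z)$ to vanish, we obtain $\tau(z)=0$. With this in hand I would insert the quadratic harmonic functions $f=\Re(w^2)$ and $f=\Im(w^2)$, for which the first-order terms now drop out, reducing the formula to $2\,\Re(\kappa(z,z))$ and $2\,\Im(\kappa(z,z))$ and forcing $\kappa(z,z)=0$.

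The only point requiring real care, and the step I would write out most carefully, is the bookkeeping in the ``only if'' direction: one must confirm that each chosen test function is harmonic on an open set meeting $z(M)$ (all four are harmonic on the whole plane, so this is automatic) and that the evaluations of the Wirtinger derivatives, together with the conjugation identities of Lemma \ref{lemma-conjugate}, assemble exactly into the real and imaginary parts of $\tau(z)$ and of $\kappa(z,z)$. Since everything is pointwise and the point $p\in M$ is arbitrary, the two equations $\tau(z)=0$ and $\kappa(z,z)=0$ hold throughout $M$, completing the characterisation.
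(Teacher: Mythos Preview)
Your proof is correct and follows essentially the same approach as the paper: both reduce to formula \eqref{equation-tau1}, kill the mixed term via $\partial^2 f/\partial w\,\partial\bar w=0$, and invoke Lemma \ref{lemma-conjugate} to pair the $z$- and $\bar z$-coefficients. The only difference is that you make explicit the choice of harmonic test functions $\Re(w),\Im(w),\Re(w^2),\Im(w^2)$ to isolate each coefficient, whereas the paper leaves this step implicit.
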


\begin{proof}
The function $z:(M,g)\to\cn$ is a $(1,1)$-harmonic morphism if and only if, for any harmonic $f:U\to\cn$ defined on an open subset $U$ of $\cn$ containing the image $z(M)$ of $z$, the tension field $\tau(\phi)$ of the composition $\phi=f\circ z$ vanishes. Since the function $f$ is assumed to be harmonic we have 
$$\tau(f)=\frac{\partial^2 f}{\partial z\partial\bar z}=0.$$
It now follows from Lemma \ref{lemma-conjugate} and equation (\ref{equation-tau1}) that $\tau(\phi)=0$ is equivalent to $$\kappa(z,z)=0\ \ \text{and}\ \ \tau(z)=0.$$
\end{proof}

\begin{proposition}\label{proposition-constant}
Let $z:(M,g)\to\cn$ be a complex-valued $(1,q)$-harmonic morphism from a Riemannian manifold. If $1<q$ then the function $z$ is constant.
\end{proposition}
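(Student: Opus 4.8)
The plan is to combine the monotonicity of Lemma \ref{lemma-higher-lower} with a single, explicitly chosen biharmonic test function. Since $1<q$, Lemma \ref{lemma-higher-lower} guarantees that $z$ is simultaneously a $(1,1)$- and a $(1,2)$-harmonic morphism. From the $(1,1)$ case, Theorem \ref{theorem-(1,1)} gives the two pointwise identities $\tau(z)=0$ and $\kappa(z,z)=0$, and Lemma \ref{lemma-conjugate} then yields $\tau(\bar z)=\overline{\tau(z)}=0$ and $\kappa(\bar z,\bar z)=\overline{\kappa(z,z)}=0$ as well. These facts will annihilate all but one term in the tension field of the composition, so the remaining task is simply to extract information from the $(1,2)$ property.

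The key step is to probe the $(1,2)$ property with the function $f:\cn\to\rn$ given by $f(w)=w\bar w=|w|^2$. On the target $\cn$ one computes $\tau(f)=\partial^2 f/\partial w\partial\bar w=1$, a nonzero constant, whence $\tau^2(f)=\tau(1)=0$; thus $f$ is $2$-harmonic but not harmonic. Moreover $f$ is defined on all of $\cn$, hence on a neighbourhood of the image $z(M)$, so the composition is legitimate. Because $z$ is a $(1,2)$-harmonic morphism, the composition $\phi=f\circ z=|z|^2$ must be harmonic, i.e. $\tau(\phi)=0$.

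Next I would compute $\tau(\phi)$ from equation (\ref{equation-tau1}). For $f=w\bar w$ the derivatives entering that formula are $\partial f/\partial z=\bar z$, $\partial f/\partial\bar z=z$ and $\partial^2 f/\partial z\,\partial\bar z=1$, while the pure second derivatives $\partial^2 f/\partial z^2$ and $\partial^2 f/\partial\bar z^2$ vanish. Substituting these into (\ref{equation-tau1}) and using $\tau(z)=\tau(\bar z)=0$ from the first paragraph collapses everything to the mixed term, giving $\tau(\phi)=2\,\kappa(z,\bar z)$. Harmonicity of $\phi$ therefore forces $\kappa(z,\bar z)=0$ at every point of $M$.

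The final ingredient is the positivity of $\kappa(z,\bar z)$. Writing $z=x+i\,y$ with $x,y$ real-valued and expanding $\kappa(z,\bar z)=g(\nabla z,\nabla\bar z)$, the imaginary cross-terms cancel and one is left with $\kappa(z,\bar z)=|\nabla x|^2+|\nabla y|^2\ge 0$. Hence $\kappa(z,\bar z)\equiv 0$ forces $\nabla x\equiv\nabla y\equiv 0$, so that $z$ is constant on each connected component of $M$. There is no serious obstacle in this argument: the only points meriting care are the choice of the biharmonic probe $|w|^2$ and the elementary but essential sign observation $\kappa(z,\bar z)\ge 0$, which is what converts the vanishing of a conformality term into the vanishing of both gradients.
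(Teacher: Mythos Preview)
Your argument is correct and follows essentially the same route as the paper: the paper simply notes that, since $q>1$, equation~(\ref{equation-tau1}) forces both $\kappa(z,z)=0$ and $\kappa(z,\bar z)=0$, hence $z$ is constant; your proof makes this explicit by singling out the biharmonic probe $f(w)=|w|^2$ to isolate the $\kappa(z,\bar z)$ term, and then spells out the positivity $\kappa(z,\bar z)=|\nabla x|^2+|\nabla y|^2$. (A cosmetic point: with the standard Euclidean Laplacian on $\cn\cong\rn^2$ one has $\tau(f)=4\,\partial^2 f/\partial w\,\partial\bar w=4$, not $1$, but this does not affect the reasoning.)
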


\begin{proof}
The condition $1<q$ implies from (3.1) that both $\kappa(z,z)=0$ and $\kappa(z,\bar z)=0$ or equivalently that the function $z$ is constant.
\end{proof}

The next result is our fundamental tool for analysing the case of $(2,q)$.

\begin{lemma}\label{lemma-tau2}
Let $z:(M,g)\to\mathbb{C}$ be a complex-valued function from a Riemannian manifold and $f:U\to\mathbb{C}$ be defined on an open subset $U$ of $\mathbb{C}$ containing the image $z(M).$ Then the $2$-tension field $\tau^2(\phi)$ of the composition $\phi=f\circ z$ satisfies 
\begin{eqnarray*}
	& &\tau^2(\phi)\\
	&=&\tau^2(z)\cdot\frac{\partial f}{\partial z}+\tau^2(\bar z)\cdot\frac{\partial f}{\partial \bar z}\\
	& &+\bigl[\tau(z)^2+2\cdot\kappa(z,\tau(z))+\tau(\kappa(z,z))\bigr]\cdot\frac{\partial^2 f}{\partial z^2}\\
	& &+2\cdot \bigl[\tau(z)\tau(\bar z)+\kappa(z,\tau(\bar z))+\kappa(\bar z,\tau(z))+\tau(\kappa(z,\bar z))\bigr]\cdot\frac{\partial^2 f}{\partial z\partial\bar z}\\
	& &+\bigl[\tau(\bar z)^2+2\cdot\kappa(\bar z,\tau(\bar z))+\tau(\kappa(\bar z,\bar z))\bigr]\cdot\frac{\partial^2 f}{\partial\bar z^2}\\
	& &+2\cdot
	\bigl[\kappa(z,z)\tau(z)+\kappa(z,\kappa(z,z))\bigr]\cdot\frac{\partial^3 f}{\partial z^3}\\
	& &+2\cdot
	\bigl[2\cdot\kappa(z,\bar z)\tau(z)+\kappa(z,z)\tau(\bar z)\\
	& &\quad\quad  +\kappa(\bar z,\kappa(z,z))+2\cdot\kappa(z,\kappa(z,\bar z))\bigr]\cdot\frac{\partial^3 f}{\partial z^2\partial\bar z}\\
	& &+2\cdot\bigl[2\cdot\kappa(z,\bar z)\tau(\bar z) +\kappa(\bar z,\bar z)\tau(z)\\
	& &\quad\quad+\kappa(z,\kappa(\bar z,\bar z))+2\cdot\kappa(\bar z,\kappa(z,\bar z))
	\bigr]\cdot\frac{\partial^3 f}{\partial z\partial\bar z^2}\\
	& &+2\cdot
	\bigl[\kappa(\bar z,\bar z)\tau(\bar z)+\kappa(\bar z,\kappa(\bar z,\bar z))\bigr]\cdot\frac{\partial^3 f}{\partial\bar z^3}\\
	& &+\kappa(z,z)^2\cdot\frac{\partial^4 f}{\partial z^4}+4\cdot
	\kappa(z,z)\kappa(z,\bar z)\cdot\frac{\partial^4 f}{\partial z^3\partial\bar z}\\
	& &+2\cdot\bigl[\kappa(z,z)\kappa(\bar z,\bar z)+2\cdot\kappa(z,\bar z)^2\bigr]\cdot\frac{\partial^4 f}{\partial z^2\partial\bar z^2}\\
	& &+4\cdot
	\kappa(\bar z,\bar z)\kappa(z,\bar z)\cdot\frac{\partial^4 f}{\partial z\partial\bar z^3}+\kappa(\bar z,\bar z)^2\cdot\frac{\partial^4 f}{\partial \bar z^4}.\\
\end{eqnarray*}

\end{lemma}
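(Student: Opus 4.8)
The plan is to differentiate the expression for $\tau(\phi)$ in (\ref{equation-tau1}) one more time, treating each of its five summands as a product of a ``coefficient'' $\partial^{a+b}f/\partial z^a\partial\bar z^b$, which is again a function of $(z,\bar z)$, with a ``geometric'' factor built from $\tau$ and $\kappa$ applied to $z$ and $\bar z$. To the product of two such factors I apply the Leibniz-type rule (\ref{equation-basic}), namely $\tau(u\cdot v)=\tau(u)\,v+2\,\kappa(u,v)+u\,\tau(v)$. Concretely, I write $\tau(\phi)=\sum_k A_k\cdot B_k$ with $A_k$ one of the partial derivatives of $f$ and $B_k\in\{\tau(z),\tau(\bar z),\kappa(z,z),\kappa(z,\bar z),\kappa(\bar z,\bar z)\}$, so that $\tau^2(\phi)=\sum_k\bigl[\tau(A_k)\,B_k+2\,\kappa(A_k,B_k)+A_k\,\tau(B_k)\bigr]$.

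Each of the three contributions is handled by Lemma \ref{lemma-fundamental}. The term $\tau(A_k)$ is expanded by the first formula of that lemma, since $A_k=A_k(z,\bar z)$, producing the partial derivatives of $f$ of one order higher multiplied by $\tau(z),\tau(\bar z),\kappa(z,z),\kappa(z,\bar z),\kappa(\bar z,\bar z)$. The term $\kappa(A_k,B_k)$ is expanded by the second formula of the lemma as $\kappa(A_k,B_k)=(\partial A_k/\partial z)\,\kappa(z,B_k)+(\partial A_k/\partial\bar z)\,\kappa(\bar z,B_k)$, since $B_k$ is a fixed geometric function on $M$. Finally $A_k\,\tau(B_k)$ simply carries $\tau$ of the geometric factors, which are left in the form $\tau^2(z),\tau^2(\bar z),\tau(\kappa(z,z)),\tau(\kappa(z,\bar z)),\tau(\kappa(\bar z,\bar z))$ appearing in the statement.

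Tracking the order of the $f$-derivatives then shows where each block of the asserted formula originates: the first-order derivatives $\partial f/\partial z$ and $\partial f/\partial\bar z$ arise solely from the pieces $A_k\,\tau(B_k)$ with $B_k=\tau(z),\tau(\bar z)$; the second-order derivatives collect the diagonal part of $\tau(A_k)$ for the first-order $A_k$'s together with the $\kappa(A_k,B_k)$ contributions and the $\tau(\kappa(\cdot,\cdot))$ pieces, producing the brackets $[\tau(z)^2+2\kappa(z,\tau(z))+\tau(\kappa(z,z))]$ and its companions; the third-order derivatives combine a $\tau(A_k)$-contribution from the first-order $A_k$'s with a $\kappa(A_k,B_k)$-contribution from the second-order $A_k$'s, giving the iterated terms $\kappa(z,\kappa(z,z))$, $\kappa(\bar z,\kappa(z,z))$, $\kappa(z,\kappa(z,\bar z))$ and their conjugates; and the fourth-order derivatives appear only through the top term of $\tau(A_k)$ with $A_k$ a second derivative, yielding the products of two conformality operators such as $\kappa(z,z)^2$ and $\kappa(z,z)\kappa(z,\bar z)$.

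The substantive content is entirely bookkeeping: for each mixed partial $\partial^{a+b}f/\partial z^a\partial\bar z^b$ one must gather all contributions scattered across the fifteen expansions and check that the numerical coefficients and symmetric combinations match the statement. Two devices cut the labour. The equality of mixed partials of $f$ lets one merge, for instance, the $\kappa(z,\tau(\bar z))$ and $\kappa(\bar z,\tau(z))$ contributions into the single coefficient of $\partial^2 f/\partial z\partial\bar z$. Moreover, by the conjugation symmetry of Lemma \ref{lemma-conjugate}, the whole formula is invariant under interchanging $z\leftrightarrow\bar z$ (and hence $\partial/\partial z\leftrightarrow\partial/\partial\bar z$), so the ``barred'' brackets need not be recomputed once the ``unbarred'' ones are known. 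The main obstacle is therefore not conceptual but organisational: keeping the many cross-terms correctly matched and summed, which is precisely the sort of verification the authors report carrying out independently by computer algebra.
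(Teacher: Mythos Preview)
Your proposal is correct and follows essentially the same route as the paper: apply the product rule (\ref{equation-basic}) to each of the five summands of $\tau(\phi)$ in (\ref{equation-tau1}), then use Lemma~\ref{lemma-fundamental} to expand the resulting $\tau(\partial^\bullet f)$ and $\kappa(\partial^\bullet f,\cdot)$ terms, and finally collect by order of the $f$-derivatives. The only addition on your side is the explicit use of the $z\leftrightarrow\bar z$ symmetry from Lemma~\ref{lemma-conjugate} to halve the bookkeeping, which the paper leaves implicit.
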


\begin{proof}
Utilising the two basic equations (\ref{equation-basic}) and (\ref{equation-tau1}) we see that the $2$-tension field $\tau^2(\phi)$ of the composition $\phi=f\circ z$ satisfies 
\begin{eqnarray*}
& &\tau^2(\phi)\\
&=&\tau(\frac{\partial f}{\partial z})\cdot\tau(z)+2\cdot\kappa(\frac{\partial f}{\partial z},\tau(z))+\frac{\partial f}{\partial z}\cdot\tau^2(z)\\
& &\quad +\tau(\frac{\partial f}{\partial \bar z})\cdot\tau(\bar z)+2\cdot\kappa(\frac{\partial f}{\partial \bar z},\tau(\bar z))+\frac{\partial f}{\partial \bar z}\cdot\tau^2(\bar z)\\
& &\quad +\tau(\frac{\partial^2 f}{\partial z^2})\cdot\kappa(z,z)
+2\cdot\kappa(\frac{\partial^2 f}{\partial z^2},\kappa(z,z))
+\frac{\partial^2 f}{\partial z^2}\cdot\tau(\kappa(z,z))\\
& &\quad +2\cdot\tau(\frac{\partial^2 f}{\partial z\partial\bar z})\cdot\kappa(z,\bar z)
+4\cdot\kappa(\frac{\partial^2 f}{\partial z\partial\bar z},\kappa(z,\bar z))
+2\cdot\frac{\partial^2 f}{\partial z\partial\bar z}\cdot\tau(\kappa(z,\bar z))\\
& &\quad +\tau(\frac{\partial^2 f}{\partial \bar z^2})\cdot\kappa(\bar z,\bar z)
+2\cdot\kappa(\frac{\partial^2 f}{\partial \bar z^2},\kappa(\bar z,\bar z))
+\frac{\partial^2 f}{\partial \bar z^2}\cdot\tau(\kappa(\bar z,\bar z)).
\end{eqnarray*}
By applying Lemma 3.1 and reordering the terms we then obtain the stated result.
\end{proof}

For later use, we now reformulate Lemma \ref{lemma-tau2} and thereby show that the $2$-tension field $\tau^2(\phi)$ of $\phi$ can be presented in terms of the different partial derivatives of $f$ with coefficients determined by the functions $z$,$\bar z$ and their various tension fields.

\begin{lemma}\label{lemma-tau2-expansion}
	Let $z:(M,g)\to\mathbb{C}$ be a complex-valued function from a Riemannian manifold and $f:U\to\mathbb{C}$ be defined on an open subset $U$ of $\mathbb{C}$ containing the image $z(M).$ Then the $2$-tension field $\tau^2(\phi)$ of the composition $\phi=f\circ z$ satisfies 
	\begin{eqnarray*}
		& &\tau^2(\phi)\\
		&=&\tau^2(z)\cdot\frac{\partial f}{\partial z}+\tau^2(\bar z)\cdot\frac{\partial f}{\partial \bar z}\\
		& &+\bigl[\tfrac 12\tau^2(z^2)-z\, \tau^2(z)\bigr]\cdot\frac{\partial^2 f}{\partial z^2}\\
		& &+\bigl[\tau^2(z\bar{z})-\bar{z}\tau^2(z)-z\tau^2(\bar{z})\bigr]\cdot\frac{\partial^2 f}{\partial z\partial\bar z}\\
		& &+\bigl[\tfrac{1}{2}\tau^2(\bar{z}^2)-\bar{z}\tau^2(\bar{z})\bigr]\cdot\frac{\partial^2 f}{\partial\bar z^2}\\
		& &+
		\bigl[\tfrac{1}{6}\tau^2(z^3)-\tfrac{1}{2}z\tau^2(z^2)+\tfrac{1}{2}z^2\tau^2(z)\bigr]\cdot\frac{\partial^3 f}{\partial z^3}\\
		& &+
		\bigl[\frac 12\tau^2(z^2\bar z)-\frac 12\bar z\tau^2(z^2)+z\bar z\tau^2(z)- z\tau^2(z\bar z)+\frac 12z^2\tau^2(\bar z)\bigr]\cdot\frac{\partial^3 f}{\partial z^2\partial\bar z}\\
		& &+\bigl[\frac 12\tau^2(z\bar z^2)-\frac 12 z\tau^2(\bar z^2)+z\bar z\tau^2(\bar z)- \bar z\tau^2(z\bar z)+\frac 12 \bar z^2\tau^2(z)
		\bigr]\cdot\frac{\partial^3 f}{\partial z\partial\bar z^2}\\
		& &+
		\bigl[\tfrac{1}{6}\tau^2(\bar z^3)-\tfrac{1}{2}\bar z\tau^2(\bar z^2)+\tfrac{1}{2}\bar z^2\tau^2(\bar z)\bigr]\cdot\frac{\partial^3 f}{\partial\bar z^3}\\
		& &+\bigl[\tfrac{1}{24}\tau^2(z^4)-\tfrac{1}{6}z\tau^2(z^3)+\tfrac{1}{4}z^2\tau^2(z^2)-\tfrac{1}{6}z^3\tau^2(z)\bigr]\cdot\frac{\partial^4 f}{\partial z^4}+\\
		& &+ \bigl[\tfrac{1}{6}\tau^2(z^3\bar{z})-\tfrac{1}{6}\bar{z}\tau^2(z^3)+\tfrac{1}{2}z\bar z\tau^2(z^2)-\tfrac{1}{2}z\tau^2(z^2\bar{z})+\tfrac{1}{2}z^2\tau^2(z\bar{z})\bigr.\\
		& &\bigl. \quad \quad -\tfrac{1}{6}z^3\tau^2(\bar{z})-\tfrac{1}{2}z^2\bar{z}\tau^2(z)\bigr]\cdot\frac{\partial^4 f}{\partial z^3\partial\bar z}\\
		& &+\bigl[\tfrac{1}{4}\tau^2(z^2\bar{z}^2)+\tfrac{1}{4}\bar z^2\tau^2(z^2)+\tfrac{1}{4}z^2\tau^2(\bar z^2)-\tfrac{1}{2}\bar{z}\tau^2(z^2\bar z)-\tfrac{1}{2}z\bar{z}^2\tau^2(z)+z\bar{z}\tau^2(z\bar{z})\bigr.\\
		& &\quad\quad \bigl.-\tfrac{1}{2}z^2\bar{z}\tau^2(\bar{z})-\tfrac{1}{2}z\tau^2(\bar{z}^2z)\bigr]\cdot\frac{\partial^4 f}{\partial z^2\partial\bar z^2}\\
		& &+ \bigl[\tfrac{1}{6}\tau^2(z\bar{z}^3)-\tfrac{1}{6}z\tau^2(\bar z^3)+\tfrac{1}{2} z\bar z\tau^2(\bar z^2)-\tfrac{1}{2}\bar z\tau^2(\bar z^2 z)+\tfrac{1}{2}\bar z^2\tau^2(z\bar{z})\bigr.\\
		& &\bigl. \quad \quad -\tfrac{1}{6}z^3\tau^2(z)-\tfrac{1}{2}\bar z^2 z\tau^2(\bar z)\bigr]\cdot\frac{\partial^4 f}{\partial z\partial\bar z^3}\\
		& & +\bigl[\tfrac{1}{24}\tau^2(\bar z^4)-\tfrac{1}{6}\bar z\tau^2(\bar z^3)+\tfrac{1}{4}\bar z^2\tau^2(\bar z^2)-\tfrac{1}{6}\bar z^3\tau^2(\bar z)\bigr]\cdot\frac{\partial^4 f}{\partial \bar z^4}.\\
	\end{eqnarray*}
	
\end{lemma}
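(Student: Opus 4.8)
The plan is to prove the identity by reducing it to Lemma~\ref{lemma-tau2}, which I may assume. That lemma already writes $\tau^2(\phi)$ as a linear combination of the fourteen partial derivatives $\partial^{a+b}f/\partial z^a\partial\bar z^b$ with $a+b\le4$, the coefficients being functions on $M$ built from $z,\bar z$ and their tension fields; call these coefficients $C_{ab}$, and call the corresponding brackets in the present statement $D_{ab}$. Since $f$ is arbitrary and, at any fixed $p\in M$, the values of the partials $\partial^{a+b}f/\partial z^a\partial\bar z^b$ at $z(p)$ can be prescribed independently (take $f$ a polynomial in $z$ and $\bar z$), two such expressions coincide precisely when their coefficient functions do. Hence it suffices to prove $C_{ab}=D_{ab}$ for every $(a,b)$ with $a+b\le4$.

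The observation that organises this is that each bracket $D_{ab}$ in the statement fits the single pattern
\[
D_{ab}=\sum_{i=0}^{a}\sum_{j=0}^{b}\frac{(-1)^{i+j}}{i!\,j!\,(a-i)!\,(b-j)!}\,z^i\bar z^j\,\tau^2\!\left(z^{a-i}\bar z^{b-j}\right),
\]
in which the top term $(i,j)=(a,b)$ drops out because $\tau^2(1)=0$; one checks by inspection that this reproduces, e.g., $\tfrac12\tau^2(z^2)-z\,\tau^2(z)$ for $(a,b)=(2,0)$ and $\tau^2(z\bar z)-\bar z\,\tau^2(z)-z\,\tau^2(\bar z)$ for $(a,b)=(1,1)$. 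To verify all the identities $C_{ab}=D_{ab}$ at once I would feed the exponential test function $f=e^{sz+t\bar z}$, for which $\partial^{a+b}f/\partial z^a\partial\bar z^b=s^a t^b f$, into Lemma~\ref{lemma-tau2}; this yields $e^{-(sz+t\bar z)}\,\tau^2(e^{sz+t\bar z})=\sum_{a+b\le4}C_{ab}\,s^a t^b$. On the other hand, summing the displayed pattern against $s^a t^b$ and substituting $p=a-i$, $q=b-j$ factorises the sum, by linearity of $\tau^2$, as
\[
\Bigl(\textstyle\sum_i\tfrac{(-zs)^i}{i!}\Bigr)\Bigl(\sum_j\tfrac{(-\bar z t)^j}{j!}\Bigr)\,\tau^2\!\Bigl(\sum_{p,q}\tfrac{(sz)^p(t\bar z)^q}{p!\,q!}\Bigr)=e^{-(sz+t\bar z)}\,\tau^2(e^{sz+t\bar z}).
\]
The two generating functions agree, so $C_{ab}=D_{ab}$ for all $a+b\le4$, which is the assertion.

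A completely elementary alternative, which I would keep in reserve, is to expand each $D_{ab}$ by hand. For $(a,b)=(2,0)$, Leibniz's rule (\ref{equation-basic}) gives $\tau(z^2)=2z\,\tau(z)+2\kappa(z,z)$, and a second application together with Lemma~\ref{lemma-fundamental} gives $\tfrac12\tau^2(z^2)=\tau(z)^2+2\kappa(z,\tau(z))+z\,\tau^2(z)+\tau(\kappa(z,z))$, so that $\tfrac12\tau^2(z^2)-z\,\tau^2(z)$ is exactly the coefficient of $\partial^2f/\partial z^2$ in Lemma~\ref{lemma-tau2}; the remaining thirteen brackets are treated the same way.

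The main obstacle is organisational rather than conceptual. In the generating-function route the only points needing care are the justification that the partials of $f$ may be prescribed independently, so that coefficient matching is legitimate, and the remark that $e^{-(sz+t\bar z)}\,\tau^2(e^{sz+t\bar z})$ is genuinely a polynomial of degree $\le4$ in $s,t$; the latter holds because each application of $\tau$ to $e^{sz+t\bar z}$ multiplies it by $\tau(sz+t\bar z)+\kappa(sz+t\bar z,sz+t\bar z)$, which is of degree $2$ in $s,t$. In the elementary route the obstacle is sheer bookkeeping, concentrated in the mixed top-order coefficient of $\partial^4 f/\partial z^2\partial\bar z^2$, where eight monomial terms must be expanded and recombined.
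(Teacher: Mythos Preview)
Your proof is correct, and your generating-function route is genuinely different from the paper's. The paper proceeds in the most elementary way: it lists seven identities (and their conjugates), each expressing one of the coefficients $C_{ab}$ from Lemma~\ref{lemma-tau2} as the corresponding bracket $D_{ab}$, and verifies each identity separately by repeated use of the product rule~(\ref{equation-basic}). This is exactly your ``elementary alternative''. Your main argument instead recognises the uniform pattern $D_{ab}=\sum_{i,j}\tfrac{(-1)^{i+j}}{i!j!(a-i)!(b-j)!}\,z^i\bar z^j\,\tau^2(z^{a-i}\bar z^{b-j})$ and checks all fourteen identities at once via the exponential test function. What you gain is economy and a clear explanation of \emph{why} the brackets take this form; what the paper's approach gains is that no reader has to think about convergence or about why $e^{-(sz+t\bar z)}\tau^2(e^{sz+t\bar z})$ is polynomial in $s,t$.

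Two small comments. First, your opening reduction is stated as an equivalence (``two such expressions coincide precisely when their coefficient functions do''), but you only need the trivial direction: once $C_{ab}=D_{ab}$ pointwise, the two expansions agree for every $f$. The independence-of-partials remark is harmless but unnecessary here. Second, your justification that $e^{-(sz+t\bar z)}\tau^2(e^{sz+t\bar z})$ has degree $\le4$ in $(s,t)$ is phrased a bit loosely: the second application of $\tau$ acts on $e^u\cdot P_1$, not on $e^u$ alone, so it does not literally ``multiply by'' a degree-$2$ factor. The cleanest fix is simply to invoke Lemma~\ref{lemma-tau2} itself for $f=e^{sz+t\bar z}$, which exhibits the right-hand side directly as a polynomial of degree $\le4$ times $e^{sz+t\bar z}$.
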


\begin{proof}
The statement follows directly by inserting the following identities, and their conjugates, into the formula given in Lemma \ref{lemma-tau2}.  For this see Lemma \ref{lemma-conjugate}.
\begin{eqnarray*}\label{1}
\quad \tau(z)^2+2\cdot\kappa(z,\tau(z))+\tau(\kappa(z,z))
=\frac 12\tau^2(z^2)-z\, \tau^2(z),
\end{eqnarray*}
\begin{eqnarray*}\label{2}
& &\quad\tau(z)\tau(\bar z)+\kappa(z,\tau(\bar z))+\kappa(\bar z,\tau(z))+\tau(\kappa(z,\bar z))\\ 
& &=\frac 12\bigl(\tau^2(z\bar z)-\tau^2(z)\bar z-z\tau^2(\bar z)\bigr),
\end{eqnarray*}
\begin{eqnarray*}\label{3}
2\,\tau(z)\kappa(z,z)+2\kappa(z,\kappa(z,z))=\tfrac{1}{6}\tau^2(z^3)-\tfrac{1}{2}z\tau^2(z^2)+\tfrac{1}{2}z^2\tau^2(z),
\end{eqnarray*}
\begin{eqnarray*}\label{4}
& &2\,\kappa(z,\bar z)\tau(z)+\kappa(\bar z,\kappa(z,z))+\tau(\bar z)\kappa(z,z)+2\,\kappa(z,\kappa(z,\bar z))	\\
&=&\frac 14\tau^2(z^2\bar z)-\frac 14\bar z\tau^2(z^2)+\frac 12z\bar z\tau^2(z)-\frac 12z\tau^2(z\bar z)+\frac 14z^2\tau^2(\bar z),\\
\end{eqnarray*}
\begin{eqnarray*}\label{5}
& &2\kappa(z,z)\kappa(\bar{z},\bar{z})+4\kappa(z,\bar{z})^2\\
&=&\tfrac{1}{4}\tau^2(z^2\bar{z}^2)+\tfrac{1}{4}\bar z^2\tau^2(z^2)+\tfrac{1}{4}z^2\tau^2(\bar z^2)
-\tfrac{1}{2}\bar{z}\tau^2(z^2\bar z)\\
& &-\tfrac{1}{2}z\bar{z}^2\tau^2(z)+z\bar{z}\tau^2(z\bar{z})
-\tfrac{1}{2}z^2\bar{z}\tau^2(\bar{z})
-\tfrac{1}{2}z\tau^2(\bar{z}^2z),
\end{eqnarray*}
\begin{eqnarray*}\label{6}
4\,\kappa(z,z)\kappa(z,\bar{z})
&=&\tfrac{1}{6}\tau^2(z^3\bar{z})-\tfrac{1}{6}\bar{z}\tau^2(z^3)+\tfrac{1}{2}z\bar z\tau^2(z^2)-\tfrac{1}{2}z\tau^2(z^2\bar{z})\\
& &+\tfrac{1}{2}z^2\tau^2(z\bar{z})-\tfrac{1}{6}z^3\tau^2(\bar{z})-\tfrac{1}{2}z^2\bar{z}\tau^2(z),
\end{eqnarray*}
\begin{eqnarray*}\label{7}
\kappa(z,z)^2=\tfrac{1}{24}\tau^2(z^4)-\tfrac{1}{6}z\tau^2(z^3)+\tfrac{1}{4}z^2\tau^2(z^2)-\tfrac{1}{6}z^3\tau^2(z).
\end{eqnarray*}

\end{proof}

In their paper \cite{Gha-Ou-1}, the authors introduce the notion of {\it generalised harmonic morphisms} between Riemannian manifolds.  These are exactly the $(2,1)$-harmonic morphisms in the sense of our Definition \ref{definition-pq}.  They give a characterisation of these objects between Riemannian manifolds.  In general this is rather complicated, see  Theorem 2.2 of \cite{Gha-Ou-1}.  In our context, of complex-valued functions, it is the following.

\begin{theorem}\label{theorem-(2,1)}
A complex-valued function $z:(M,g)\to\cn$ from a Riemannian manifold is a $(2,1)$-harmonic morphism if and only if 
$$\kappa(z,z)=0,\ \ \tau^2(z)=0\ \ \text{and}\ \ \tau^2(z^2)=0.$$
\end{theorem}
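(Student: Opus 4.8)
The plan is to apply Definition~\ref{definition-pq} with $(p,q)=(2,1)$: the function $z$ is a $(2,1)$-harmonic morphism precisely when $\tau^2(\phi)=0$ for every harmonic $f:U\to\cn$, where $\phi=f\circ z$. As in the proof of Theorem~\ref{theorem-(1,1)}, harmonicity of $f$ on the plane means $\partial^2 f/\partial z\,\partial\bar z=0$, and differentiating this identity shows that every partial derivative of $f$ involving both $z$ and $\bar z$ vanishes. First I would substitute these vanishing mixed derivatives into the formula of Lemma~\ref{lemma-tau2}; all terms carrying a mixed derivative drop out and only the eight pure-derivative terms remain, giving
\begin{equation*}
\begin{aligned}
\tau^2(\phi)&=C_1\cdot\tfrac{\partial f}{\partial z}+\overline{C_1}\cdot\tfrac{\partial f}{\partial \bar z}+C_2\cdot\tfrac{\partial^2 f}{\partial z^2}+\overline{C_2}\cdot\tfrac{\partial^2 f}{\partial \bar z^2}\\
&\qquad+C_3\cdot\tfrac{\partial^3 f}{\partial z^3}+\overline{C_3}\cdot\tfrac{\partial^3 f}{\partial \bar z^3}+C_4\cdot\tfrac{\partial^4 f}{\partial z^4}+\overline{C_4}\cdot\tfrac{\partial^4 f}{\partial \bar z^4},
\end{aligned}
\end{equation*}
where $C_1=\tau^2(z)$, $C_2=\tau(z)^2+2\,\kappa(z,\tau(z))+\tau(\kappa(z,z))$, $C_3=2\,\bigl[\kappa(z,z)\,\tau(z)+\kappa(z,\kappa(z,z))\bigr]$ and $C_4=\kappa(z,z)^2$; here the coefficients of the $\bar z$-derivatives are the conjugates of those of the $z$-derivatives by Lemma~\ref{lemma-conjugate}.

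Next I would argue that $\tau^2(\phi)\equiv 0$ for all harmonic $f$ forces $C_1=C_2=C_3=C_4=0$. Fix $p\in M$ and set $z_0=z(p)$. A complex-valued harmonic function near $z_0$ has the form $f=A(z)+B(\bar z)$ with $A$ and $B$ holomorphic, so its pure derivatives $\partial^k f/\partial z^k$ and $\partial^k f/\partial \bar z^k$ at $z_0$ equal $A^{(k)}(z_0)$ and $B^{(k)}(\bar z_0)$ and may be prescribed independently for $k=1,2,3,4$ by choosing $A$ and $B$ to be suitable polynomials. Hence the eight pure derivatives appearing above are independent as $f$ ranges over the harmonic functions, and the vanishing of $\tau^2(\phi)$ at every point is equivalent to the identical vanishing of each coefficient; by conjugation it suffices to treat $C_1,\dots,C_4$.

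Finally I would read off the three stated conditions. Since $C_4=\kappa(z,z)^2$, the vanishing of $C_4$ is equivalent to $\kappa(z,z)=0$, and $C_1=\tau^2(z)=0$ is the second condition. The step I expect to need the most care is showing that, once $\kappa(z,z)=0$ and $\tau^2(z)=0$ hold, the remaining two coefficients collapse to exactly the single condition $\tau^2(z^2)=0$. For $C_3$ this is automatic: as $\kappa(z,z)$ vanishes identically on $M$ its gradient vanishes, whence $\kappa(z,\kappa(z,z))=g(\nabla z,\nabla(\kappa(z,z)))=0$ and the first summand of $C_3$ also vanishes, so $C_3=0$ for free. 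For $C_2$ I would invoke the bilinear identity $\tau(z)^2+2\,\kappa(z,\tau(z))+\tau(\kappa(z,z))=\tfrac12\tau^2(z^2)-z\,\tau^2(z)$ established in the proof of Lemma~\ref{lemma-tau2-expansion}; using $\tau^2(z)=0$ this gives $C_2=\tfrac12\tau^2(z^2)$, so $C_2=0$ if and only if $\tau^2(z^2)=0$. Conversely, these same identities show that $\kappa(z,z)=0$, $\tau^2(z)=0$ and $\tau^2(z^2)=0$ make all four coefficients vanish, and by Lemma~\ref{lemma-conjugate} their conjugates vanish as well, so that $\tau^2(\phi)=0$ for every harmonic $f$. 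This establishes the equivalence.
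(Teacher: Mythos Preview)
Your proof is correct and follows essentially the same strategy as the paper's: expand $\tau^2(\phi)$ via Lemma~\ref{lemma-tau2}, use harmonicity of $f$ to kill all mixed partials, and then read off the conditions from the surviving coefficients by varying $f$. The only cosmetic difference is that the paper invokes Lemma~\ref{lemma-tau2-expansion} (where the coefficients are already rewritten in terms of $\tau^2$ of monomials) and then simplifies, whereas you stay with Lemma~\ref{lemma-tau2} and import the single identity $\tau(z)^2+2\kappa(z,\tau(z))+\tau(\kappa(z,z))=\tfrac12\tau^2(z^2)-z\,\tau^2(z)$ at the end; your handling of $C_3$ and $C_4$ directly from $\kappa(z,z)\equiv 0$ is in fact slightly more transparent than tracing the same vanishing through the reformulated coefficients of Lemma~\ref{lemma-tau2-expansion}.
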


\begin{proof}
The function $z:(M,g)\to\cn$ is a $(2,1)$-harmonic morphism if and only if, for any harmonic $f:U\to\cn$ defined on an open subset $U$ of $\cn$ containing the image $z(M)$ of $z$, the $2$-tension field $\tau^2(\phi)$ of the composition $\phi=f\circ z$ vanishes.  It follows immediately from Lemma \ref{lemma-fundamental} that 
$$\kappa(z,z)=\kappa(\bar z,\bar z)=0.$$
Since the test function $f$ is assumed to be harmonic we also have 
$$\tau(f)=\frac{\partial^2 f}{\partial z\partial\bar z}=0.$$
This means that the formulae for the $2$-tension field $\tau^2(\phi)$, presented in Lemmas \ref{lemma-fundamental} and \ref{lemma-tau2-expansion}, simplify considerably.  The statement is then a direct consequence of the latter.
\end{proof}

\begin{remark}
	In the case when the Riemannian manifold $(M,g)$ is a surface, i.e. of dimension $2$, then the horizontal conformality of $\phi:M\to\cn$ and the Cauchy-Riemann equations imply harmonicity.  That means that in this case no proper $(2,1)$-harmonic morphisms do exist.
\end{remark}

In their paper \cite{Gha-Ou-1} the authors construct the following first known proper $(2,1)$-harmonic morphism.  This was basically the only known example before this current study.

\begin{example}\label{example-basic}
	Let $\rn^4$ be the standard $4$-dimensional Euclidean space and $U$ be the open subset given by 	$$U=\{(x_1,x_2,x_3,x_4)\in\rn^4|\ x_1^2+x_2^3+x_3^2>0\}.$$  Then $z:U\to\cn$ satisfying $z(x)= \sqrt{x_1^2+x_2^2+x_3^2}+i\, x_4$ is a {proper} $(2,1)$-harmonic morphism.
\end{example}

Furthermore they introduce several interesting general methods for constructing solutions to our non-linear $(2,1)$-problem from Euclidean spaces. The following result is a direct consequence of Corollary 3.1. of \cite{Gha-Ou-1}.

\begin{proposition}\label{proposition-holomorphic-composition}
Let $(M,g)$ be a Riemannian manifold and $z:M\to\cn$ be a $(2,1)$-harmonic morphism. Further let $f:U\to\cn$ be a holomorphic function defined on an open subset of $\cn$ such that $z(M)\subset U$.  Then the composition $f\circ z:M\to\cn$ is a $(2,1)$-harmonic morphism.
\end{proposition}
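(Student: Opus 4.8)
The plan is to reduce the statement to the defining property of $z$, exploiting the classical fact that in real dimension two harmonicity is preserved under composition with a holomorphic map. Writing $\phi=f\circ z$, by Definition \ref{definition-pq} it suffices to show that $h\circ\phi$ is biharmonic for every harmonic function $h$ defined on an open subset $V\subset\cn$ with $\phi^{-1}(V)\neq\emptyset$. First I would rewrite the composition as $h\circ\phi=(h\circ f)\circ z$ and shift attention to the intermediate function $\tilde h:=h\circ f$.

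The crucial step is to check that $\tilde h$ is again harmonic. Since $f$ is holomorphic and defined on $U\supset z(M)$, the set $f^{-1}(V)$ is open and contains $z(M)$, so $\tilde h$ is defined on a neighbourhood of $z(M)$; a short Wirtinger and chain-rule computation then yields $\tau(h\circ f)=|f'|^2\cdot(\tau(h))\circ f$, which vanishes because $h$ is harmonic. Hence $\tilde h$ is harmonic, and since $z$ is by hypothesis a $(2,1)$-harmonic morphism the composition $\tilde h\circ z=h\circ\phi$ is biharmonic. As $h$ was an arbitrary harmonic test function, Definition \ref{definition-pq} shows that $\phi$ is a $(2,1)$-harmonic morphism.

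As an independent check I would confirm the conclusion through the characterisation in Theorem \ref{theorem-(2,1)}, which requires $\kappa(\phi,\phi)=0$, $\tau^2(\phi)=0$ and $\tau^2(\phi^2)=0$. The bilinearity of $\kappa$ together with the chain rule of Lemma \ref{lemma-fundamental} gives $\kappa(\phi,\phi)=(f'(z))^2\,\kappa(z,z)$, which vanishes because $\kappa(z,z)=0$ by Theorem \ref{theorem-(2,1)} applied to $z$. For the other two conditions one observes that both $f$ and $f^2$ are holomorphic, hence harmonic, so the $(2,1)$-property of $z$ (equivalently, the formula of Lemma \ref{lemma-tau2}, whose coefficients of the third- and fourth-order derivatives of $f$ all carry the vanishing factor $\kappa(z,z)$) forces $\tau^2(f\circ z)=\tau^2(f^2\circ z)=0$, that is $\tau^2(\phi)=\tau^2(\phi^2)=0$.

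I expect no serious analytic obstacle here: the entire content is the observation that the pullback $h\circ f$ of a harmonic function $h$ by a holomorphic map $f$ is again harmonic, after which the result is an immediate appeal to the hypothesis on $z$. The only points needing mild care are the bookkeeping of domains, so that $\tilde h$ really is defined and harmonic on an open set containing $z(M)$, and the verification of the conformal-invariance identity $\tau(h\circ f)=|f'|^2\,(\tau(h))\circ f$ for complex-valued $h$; both are routine.
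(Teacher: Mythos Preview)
Your argument is correct and is essentially the paper's proof unpacked: the paper simply observes that a holomorphic $f$ is a classical $(1,1)$-harmonic morphism and then invokes the composition law Lemma~\ref{lemma-composition}, which is precisely what your first paragraph does by hand (and your second paragraph, via Theorem~\ref{theorem-(2,1)}, is a pleasant but unnecessary double-check). One minor slip in the domain bookkeeping: $f^{-1}(V)$ need not contain all of $z(M)$, only meet it --- but that is exactly the hypothesis $\phi^{-1}(V)=z^{-1}(f^{-1}(V))\neq\emptyset$, so the argument goes through unchanged.
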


\begin{proof}
	It is a classical result that any such holomorphic function $f$ is a $(1,1)$-harmonic morphism. The statement then is a direct concequence of our Lemma \ref{lemma-composition}.
\end{proof}

The following is a special case of Lemma \ref{lemma-composition}, noted already in \cite{Gha-Ou-1}.

\begin{proposition}\label{proposition-known-composition-2}
	Let $(M,g)$ and $(N,h)$ be Riemannian manifolds, $f:(M,g)\to (N,h)$ be a $(2,2)$-harmonic morphism and $\phi:N\to\cn$ be a $(2,1)$-harmonic morphism.  Then the composition $\phi\circ f:(M,g)\to\cn$ is a $(2,1)$-harmonic morphism.
\end{proposition}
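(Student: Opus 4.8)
The plan is to recognise this proposition as the special case of the composition law in Lemma~\ref{lemma-composition} obtained by setting $p=r=2$ and $q=1$. Under this identification $f\colon(M,g)\to(N,h)$ plays the role of a $(p,r)=(2,2)$-harmonic morphism and $\phi\colon N\to\cn$ (with $\cn$ regarded as the Riemannian manifold $\rn^2$) plays the role of an $(r,q)=(2,1)$-harmonic morphism. The only thing one needs to observe is that the intermediate order $r=2$ is shared by the two maps, so that the lemma applies verbatim and returns a $(p,q)=(2,1)$-harmonic morphism for the composition $\phi\circ f$.

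For a self-contained argument I would instead unwind Definition~\ref{definition-pq} directly. First I would fix an arbitrary harmonic (that is, $1$-harmonic) function $k\colon V\subset\cn\to\rn$ defined on an open set $V$ with $(\phi\circ f)^{-1}(V)$ nonempty. Since $\phi$ is a $(2,1)$-harmonic morphism, the pullback $k\circ\phi\colon\phi^{-1}(V)\subset N\to\rn$ is $2$-harmonic. Next, since $f$ is a $(2,2)$-harmonic morphism and $k\circ\phi$ is a $2$-harmonic function on the open subset $\phi^{-1}(V)$ of $N$, its pullback $(k\circ\phi)\circ f$ is $2$-harmonic on the corresponding open subset of $M$. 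By associativity of composition this pullback equals $k\circ(\phi\circ f)$, so every harmonic function on $\cn$ is pulled back by $\phi\circ f$ to a $2$-harmonic function on $M$; this is exactly the assertion that $\phi\circ f$ is a $(2,1)$-harmonic morphism.

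There is no substantive obstacle here: the proof is purely formal, relying only on the chain of pullbacks and on associativity of composition. The single point that genuinely has to be checked is that the harmonicity order produced on the intermediate manifold $N$ by $\phi$---namely $2$-harmonicity---is precisely the order of test functions that the morphism $f$ is required to pull back, which holds because $f$ is assumed to be of type $(2,2)$. Had $f$ instead been, say, a $(2,1)$-harmonic morphism, the argument would break down, since $f$ would then only be guaranteed to pull back $1$-harmonic functions on $N$, not the $2$-harmonic functions delivered by $\phi$.
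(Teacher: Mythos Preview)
Your proof is correct and matches the paper's own treatment: the paper does not give a separate proof but simply notes that the proposition is a special case of Lemma~\ref{lemma-composition}, exactly as you observe with $p=r=2$ and $q=1$. Your self-contained unwinding via Definition~\ref{definition-pq} is just the proof of that lemma specialised to these parameters, so there is nothing to add.
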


\begin{remark}\label{remark-proper}
	The reader should note that the word {\it "proper"} does not appear in Proposition \ref{proposition-known-composition-2}.  As we will see later, there is a good reason for this.
\end{remark}

From the above calculations of the $2$-tension field $\tau^2(\phi)$ we now have the following result in the case when $(p,q)=(2,2)$.  This should be compared with Theorem 4.1 of \cite{Lou-Ou-1} and Theorem 3.3 of \cite{Lou-Ou-2}.

\begin{theorem}\label{theorem-(2,2)}
A complex-valued function $z:(M,g)\to\cn$ from a Riemannian manifold is a $(2,2)$-harmonic morphism if and only if
$$\kappa(z,z)=0,\ \ \tau^2(z)=0,\ \ \tau^2(z^2)=0,$$ $$\tau^2(z\bar z)=0,\ \ \tau^2(z^2\bar z)=0.$$
\end{theorem}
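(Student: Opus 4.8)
The plan is to reduce the statement ``$z$ is a $(2,2)$-harmonic morphism'' to a finite list of pointwise identities by testing against biharmonic functions on the target $\cn$. On the flat plane the operator $\tau$ acts as $\partial^2/\partial z\partial\bar z$, so $f$ is $2$-harmonic precisely when $\partial^4 f/\partial z^2\partial\bar z^2=0$. Since $\tau^2(z^a\bar z^b)=a(a-1)b(b-1)z^{a-2}\bar z^{b-2}$, the monomial $z^a\bar z^b$ is biharmonic exactly when $a\le 1$ or $b\le 1$, and these biharmonic polynomials realise at any point $z_0=z(p)$ every value of the partial derivatives of $f$ up to order four, with the single exception of $\partial^4 f/\partial z^2\partial\bar z^2$, which is forced to vanish. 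As Lemma \ref{lemma-tau2-expansion} presents $\tau^2(\phi)$ as a linear combination of these partials with coefficients depending only on $z,\bar z$ and their iterated tension fields, I conclude that $z$ is a $(2,2)$-harmonic morphism if and only if every such coefficient vanishes identically on $M$, save the one multiplying $\partial^4 f/\partial z^2\partial\bar z^2$.

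For the ``only if'' direction I would first invoke Lemma \ref{lemma-higher-lower}: a $(2,2)$-harmonic morphism is in particular a $(2,1)$-harmonic morphism, so Theorem \ref{theorem-(2,1)} already yields $\kappa(z,z)=0$, $\tau^2(z)=0$ and $\tau^2(z^2)=0$ (together with their conjugates). The two remaining identities come from coefficients invisible to harmonic test functions but seen by biharmonic ones: the coefficient of $\partial^2 f/\partial z\partial\bar z$ in Lemma \ref{lemma-tau2-expansion} is $\tau^2(z\bar z)-\bar z\tau^2(z)-z\tau^2(\bar z)$, which collapses to $\tau^2(z\bar z)$ once $\tau^2(z)=\tau^2(\bar z)=0$, forcing $\tau^2(z\bar z)=0$; feeding this in, the coefficient of $\partial^3 f/\partial z^2\partial\bar z$ collapses to $\tfrac12\tau^2(z^2\bar z)$, forcing $\tau^2(z^2\bar z)=0$.

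The substance lies in the ``if'' direction: assuming the five identities I must show that every coefficient in Lemma \ref{lemma-tau2-expansion} vanishes. I would extract from the identities in the proof of that lemma, specialised to $\kappa(z,z)=\kappa(\bar z,\bar z)=0$, the three auxiliary relations $\kappa(z,\tau(z))=-\tfrac12\tau(z)^2$, $\kappa(z,\kappa(z,\bar z))=-\kappa(z,\bar z)\,\tau(z)$ and $\tau(\kappa(z,\bar z))=-\tau(z)\tau(\bar z)-\kappa(\bar z,\tau(z))-\kappa(z,\tau(\bar z))$, the middle one being exactly where the hypothesis $\tau^2(z^2\bar z)=0$ is spent. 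A short induction built on (\ref{equation-basic}) and the second formula of Lemma \ref{lemma-fundamental} then gives $\tau^2(z^n)=0$ for every $n$, disposing of all the pure coefficients (those of $\partial^k f/\partial z^k$ and their conjugates). After inserting the five identities, every surviving coefficient reduces to a multiple of $\tau^2(z^3\bar z)$ or of its conjugate, so the whole matter comes down to the single relation $\tau^2(z^3\bar z)=0$.

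Establishing $\tau^2(z^3\bar z)=0$ is the main obstacle. I would compute $\tau(z^a\bar z)=a\bar z z^{a-1}\tau(z)+z^a\tau(\bar z)+2a z^{a-1}\kappa(z,\bar z)$ from Lemma \ref{lemma-fundamental}, apply $\tau$ once more via the Leibniz rule (\ref{equation-basic}) and Lemma \ref{lemma-fundamental}, and substitute the three auxiliary relations together with $\tau^2(z)=\tau^2(\bar z)=0$. The expectation, which I have checked for the needed case $a=3$ and which in fact holds for all $a$, is that the contributions in $\bar z\tau(z)^2$, $z^{a-1}\tau(z)\tau(\bar z)$, $z^{a-2}\kappa(z,\bar z)\tau(z)$, $\kappa(\bar z,\tau(z))$ and $\kappa(z,\tau(\bar z))$ cancel in pairs, leaving $\tau^2(z^a\bar z)=0$. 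Conjugation via Lemma \ref{lemma-conjugate} then gives $\tau^2(z\bar z^3)=0$, and substituting all of these back into Lemma \ref{lemma-tau2-expansion} shows $\tau^2(\phi)=0$ for every biharmonic $f$. The delicate point is the bookkeeping: one must track the new scalars $\kappa(z,\tau(z))$, $\kappa(\bar z,\tau(z))$, $\kappa(z,\kappa(z,\bar z))$ and $\tau(\kappa(z,\bar z))$ and verify that the five hypotheses furnish exactly enough relations among them to produce the cancellation.
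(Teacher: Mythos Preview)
Your argument is correct and rests on the same strategy as the paper: expand $\tau^2(f\circ z)$ via Lemma~\ref{lemma-tau2-expansion} as a linear combination of the partial derivatives of $f$, observe that biharmonic test polynomials realise all of these independently except $\partial^4 f/\partial z^2\partial\bar z^2$, and conclude that $z$ is a $(2,2)$-harmonic morphism precisely when every remaining coefficient vanishes.

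The difference lies entirely in the execution of the ``if'' direction. You work only with Lemma~\ref{lemma-tau2-expansion}, so in order to kill the coefficients of $\partial^3 f/\partial z^3$, $\partial^4 f/\partial z^4$ and $\partial^4 f/\partial z^3\partial\bar z$ (and their conjugates) you must prove by hand that the five hypotheses force $\tau^2(z^n)=0$ for all $n$ and $\tau^2(z^3\bar z)=0$, via your three auxiliary relations and the cancellation you describe. The paper instead reads these vanishings off directly from Lemma~\ref{lemma-tau2}: once $\kappa(z,z)=\kappa(\bar z,\bar z)=0$, each of those coefficients in Lemma~\ref{lemma-tau2} visibly contains a factor $\kappa(z,z)$ or $\kappa(\bar z,\bar z)$ and is therefore zero, so the only coefficients left alive are exactly those producing the five stated identities and their conjugates. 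Your route is self-contained and avoids quoting Lemma~\ref{lemma-tau2}, but at the cost of reconstructing its content through the very identities that appear in the proof of Lemma~\ref{lemma-tau2-expansion}; the paper's route makes the sufficiency immediate, with no induction and no separate computation of $\tau^2(z^3\bar z)$.
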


\begin{proof}
The function $z:(M,g)\to\cn$ is a $(2,2)$-harmonic morphism if and only if, for any $2$-harmonic $f:U\to\cn$ defined on an open subset $U$ of $\cn$ containing the image $z(M)$ of $z$, the $2$-tension field $\tau^2(\phi)$ of the composition $\phi=f\circ z$ vanishes. It follows directly from Lemma \ref{lemma-fundamental} that 
$$\kappa(z,z)=\kappa(\bar z,\bar z)=0.$$
Since the function $f$ is assumed to be $2$-harmonic we also have 
$$\tau^2(f)=\frac{\partial^4 f}{\partial z^2\partial\bar z^2}=0.$$
This means that the formulae for the $2$-tension field $\tau^2(\phi)$, presented in Lemmas \ref{lemma-fundamental} and \ref{lemma-tau2-expansion}, simplify considerably.  The statement is then an immediate consequence of the latter.
\end{proof}

The next statement follows immediately from Proposition 3.2. of \cite{Gha-Ou-1}.

\begin{proposition}\label{proposition-sum}
	Let $(M,g)$, $(N,h)$ be Riemannian manifolds, $\phi:M\to\cn$ be a $(2,1)$-harmonic morphism and $\psi:N\to\cn$ be a $(1,1)$-harmonic morphism. Then the sum $\Phi=\phi\oplus\psi:M\times N\to\cn$, with  $$\Phi:(x,y)\mapsto \phi(x)+\psi(y),$$ is a $(2,1)$-harmonic morphism on the Riemannian product $M\times N$.
\end{proposition}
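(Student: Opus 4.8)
The plan is to verify the three defining conditions of a $(2,1)$-harmonic morphism given by Theorem \ref{theorem-(2,1)}, namely
$$\kappa(\Phi,\Phi)=0,\qquad \tau^2(\Phi)=0,\qquad \tau^2(\Phi^2)=0,$$
where now $\tau$ and $\kappa$ are the operators of the Riemannian product $M\times N$. The starting point is to record how these operators act on a product. Writing $\tau_M,\kappa_M$ and $\tau_N,\kappa_N$ for the operators on the two factors, and regarding $\phi,\psi$ as functions on $M\times N$ through the projections, the splitting of the product metric $g\oplus h$ yields
$$\tau(\phi)=\tau_M(\phi),\qquad \tau(\psi)=\tau_N(\psi),\qquad \kappa(\phi,\psi)=0,$$
the last identity because $\nabla\phi$ and $\nabla\psi$ lie in the mutually orthogonal subbundles $TM$ and $TN$. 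More generally, $\tau$ and $\kappa$ applied to a function of the $M$-variable alone reduce to $\tau_M,\kappa_M$, and symmetrically on the $N$-factor. From the hypotheses, via Theorems \ref{theorem-(1,1)} and \ref{theorem-(2,1)}, I would extract $\kappa_M(\phi,\phi)=0$, $\tau_M^2(\phi)=0$, $\tau_M^2(\phi^2)=0$ together with $\kappa_N(\psi,\psi)=0$ and $\tau_N(\psi)=0$.

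The first two conditions then follow quickly. By bilinearity and symmetry of $\kappa$,
$$\kappa(\Phi,\Phi)=\kappa_M(\phi,\phi)+2\,\kappa(\phi,\psi)+\kappa_N(\psi,\psi)=0.$$
For the second, since $\tau_N(\psi)=0$ we get $\tau(\Phi)=\tau_M(\phi)$, a function of the $M$-variable alone, and applying $\tau$ once more gives $\tau^2(\Phi)=\tau_M^2(\phi)=0$.

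The real work is the third condition. I would expand $\Phi^2=\phi^2+2\,\phi\psi+\psi^2$ and, by linearity of $\tau$, treat the three summands separately, using the product rule (\ref{equation-basic}) repeatedly. The outer terms are harmless: $\tau^2(\phi^2)=\tau_M^2(\phi^2)=0$, while $\tau(\psi^2)=2\,\psi\,\tau_N(\psi)+2\,\kappa_N(\psi,\psi)=0$ forces $\tau^2(\psi^2)=0$. The crux is the mixed term $\tau^2(\phi\psi)$. A first application of (\ref{equation-basic}) gives
$$\tau(\phi\psi)=\tau_M(\phi)\,\psi+2\,\kappa(\phi,\psi)+\phi\,\tau_N(\psi)=\tau_M(\phi)\,\psi,$$
where both $\kappa(\phi,\psi)=0$ and $\tau_N(\psi)=0$ are used. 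Applying (\ref{equation-basic}) again to the product $\tau_M(\phi)\cdot\psi$, the three resulting terms are killed respectively by $\tau_M^2(\phi)=0$, by $\kappa(\tau_M(\phi),\psi)=0$ (orthogonality of the factor gradients), and once more by $\tau_N(\psi)=0$. Hence $\tau^2(\phi\psi)=0$ and therefore $\tau^2(\Phi^2)=0$, so $\Phi$ is a $(2,1)$-harmonic morphism.

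The main obstacle I anticipate is the bookkeeping in this mixed term: one must check that every cross-coupling between the two factors is annihilated either by the orthogonality $\kappa(\phi,\psi)=0$ or by the harmonicity $\tau_N(\psi)=0$. It is exactly the \emph{combination} of these two facts, rather than either one alone, that forces $\tau^2(\phi\psi)$ to vanish, and this is what explains the asymmetry in the statement, that $\psi$ need only be a $(1,1)$-morphism while $\phi$ must be a full $(2,1)$-morphism.
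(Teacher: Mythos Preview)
Your argument is correct. The paper itself does not prove this proposition but simply cites Proposition~3.2 of \cite{Gha-Ou-1}; your direct verification of the three conditions of Theorem~\ref{theorem-(2,1)} via the product splitting is, however, exactly the method the paper employs later in proving the more general Theorem~\ref{theorem-general-2} (of which this proposition is the special case $f(\phi,\psi)=\phi+\psi$), so your approach is fully in line with the paper's own techniques.
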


\section{New $(2,1)$-harmonic morphisms}

In this section we present several new proper complex-valued $(2,1)$-harm\-onic morphisms locally defined on Euclidean $\rn^n$. Example \ref{example-first} shows that such objects can easily be constructed for any dimension $n\ge 4$.

\begin{definition}
For a positive integer $p\in\zn^+$ we denote by $i_p$ the {\it inversion} $i_p:\rn^{2p}\setminus\{0\}\to\rn^{2p}\setminus\{0\}$ of the unit sphere $S^{2p-1}$ in $\rn^{2p}$ satisfying $$i_p(x)=\frac x{|x|^2}.$$
Let $\phi:U\to\cn$ be a function defined locally on an open subset $U$ of $\rn^{2p}\setminus\{0\}$. Then by its {\it dual function} $\phi^*$ we shall mean the composition $\phi^*=\phi\circ i_p:U\to\cn$.
\end{definition}

\begin{example}\label{example-first}
Let $\rn^n$ be the standard $n$-dimensional Euclidean space of dimension $n\ge 4$ and $U$ be the open subset given by 	$$U=\{x\in\rn^n|\ x_1^2+x_2^3+x_3^2>0\}.$$
Then the complex-valued function $\phi:U\to\cn$ defined by  
$$\phi(x)= \sqrt{x_1^2+x_2^2+x_3^2}+\sum_{k=4}^n b_k\cdot x_k$$ is a proper $(2,1)$-harmonic morphism if and only if the complex coefficients satisfy the relation
$$1+b_4^2+\cdots +b_n^2=0.$$
The same applies to the dual function $\phi^*=\phi\circ i_{p}$ in the case when $n=2p$.  It should be noted that, when $n=4$, the function $\phi$ reduces to Example 2 in \cite{Gha-Ou-1}.
\end{example}

\begin{example}\label{example-second}
Let $U$ be the open subset of the standard Euclidean space $\rn^4$ with $U=\{(x_1,x_2,x_3,x_4)\in\rn^4|\ x_2^2+x_3^2> 0\}$ and define the function $\phi:U\to\cn$ by			
$$\phi(x)=\frac{ x_2(1-|x|^2)+2\,x_1x_3}{x_2^2+x_3^2}+i\cdot\frac{x_3(1-|x|^2)-2\,x_1x_2}{x_2^2+x_3^2}.$$
Then $\phi$ is a proper $(2,1)$-harmonic morphism.  Furthermore, its dual function $\phi^*=\phi\circ i_2$ is the proper $(2,1)$-harmonic morphism given by  
$$\phi^*(x)=4\,x_1\cdot\frac{x_3-i\,x_2}{x_2^2+x_3^2}-\phi(x).$$ 
\end{example}	

\begin{example}\label{example-third}
The complex-valued function $\phi:\rn^4\setminus\{0\}\to\cn$ satisfying 		
$$\phi(x)=\log{\sqrt{x_1^2+x_2^2+x_3^2+x_4^2}}+i\cdot\arccos\Bigl({\frac{x_1}{\sqrt{x_1^2+x_2^2+x_3^2+x_4^2}}}\Bigr)$$ is a proper $(2,1)$-harmonic morphism. Its dual function $\phi^*=\phi\circ i_2$ is the proper $(2,1)$-harmonic morphism satisfying
$$\phi^*(x)=-\log{\sqrt{x_1^2+x_2^2+x_3^2+x_4^2}}+i\cdot\arccos\Bigl({\frac{x_1}{\sqrt{x_1^2+x_2^2+x_3^2+x_4^2}}}\Bigr).$$	
\end{example}

\begin{example}\label{example-fourth}
For a positive $r\in\rn^+$, the well-known local $(1,1)$-harmonic morphism $\phi_r:U\subset\rn^3\to\cn$, often called the {\it outer-disc example}, is given by
$$\phi_r(x)=\frac{-(x_3+ir)+\sqrt{x_1^2+x_2^2+x_3^2-r^2+2ir\cdot x_3}}{x_1-ix_2}.$$
Then the dual map $\phi_r^*=\phi_r\circ i_2$ satisfies 
$$\phi_r^*(x)=\frac{\sqrt{x_1^2+x_2^2+x_3^2+2ir\cdot x_3\cdot|x|^2-r^2\cdot|x|^4}-(x_3+ir\cdot|x|^2)}{x_1-ix_2}.$$
This is a proper $(2,1)$-harmonic morphism on $\rn^4$.
\end{example}

In the above Examples \ref{example-first}-\ref{example-fourth} we have seen that the  constructed complex-valued $(2,1)$-harmonic morphisms $\phi$ and its dual $\phi^*$ are both proper.  The next three examples show that this is not true in general, see Remark \ref{remark-proper}.

\begin{example}
For complex numbers $a,b,c,d\in\cn$ with $a^2+b^2+c^2+d^2=0$, let $\phi:\rn^4\setminus\{0\}\to\cn$ be the proper $(2,1)$-harmonic morphism 
$$\phi(x)= \frac{a\cdot x_1+b\cdot x_2+c\cdot x_3+d\cdot x_4}{x_1^2+x_2^2+x_3^2+x_4^2}.$$
Then its dual function 
$\phi^*=\phi\circ i_2$ is the globally defined $(1,1)$-harmonic morphism satisfying 
$$\phi^*:(x_1,x_2,x_3,x_4)\mapsto a\cdot x_1+b\cdot x_2+c\cdot x_3+d\cdot x_4.$$ 
By Lemma 2.5, this is a $(2,1)$-harmonic morphism, but it is not proper.
\end{example}

\begin{example}
For elements $a,b,c,d\in\cn$, define the complex-valued function $\phi:U\subset\rn^4\to\cn$ by
\begin{eqnarray*}
\phi(x)&=&\frac{a\cdot(x_1^2+x_2^2+x_3^2+x_4^2)+b\cdot(x_3+ix_4)}{x_1+ix_2}\\
& &\quad +\frac{c\cdot(x_1^2+x_2^2+x_3^2+x_4^2)+d\cdot(x_1+ix_2)}{x_3+ix_4}.
\end{eqnarray*}
Then $\phi$ is a proper $(2,1)$-harmonic morphism and its dual  $\phi^*=\phi\circ i_2$ is the holomorphic function
$$\phi^*(x)=\frac{
d\cdot(x_1+i\,x_2)^2+c\cdot(x_1+i\,x_2)+
b\cdot(x_3+i\,x_4)^2+a\cdot(x_3+i\,x_4)}
{(x_1+i\,x_2)\cdot(x_3+i\,x_4)}.
$$  This is clearly a $(2,1)$-harmonic morphism which is not proper.
\end{example}

\begin{example}
Define the complex-valued function $\phi:\rn^4\setminus\{0\}\to\cn$ by 
$$\phi(x)=\cos(\frac{x_1+i\,x_2}{x_1^2+x_2^2+x_3^2+x_4^2})+i\cdot\sin(\frac{x_3+i\,x_4}{x_1^2+x_2^2+x_3^2+x_4^2}).$$
Then $\phi$ is a proper $(2,1)$-harmonic morphism and its dual satisfying 
$$\phi^*(x)=\phi\circ i_2(x)=\cos(x_1+i\,x_2)+i\cdot\sin(x_3+i\,x_4)$$ 
is holomorphic and hence a $(2,1)$-harmonic morphism, but not proper as it is a $(1,1)$-harmonic morphism.
\end{example}

\section{A Generalised Construction Method}

The main purpose of this section is to prove Theorem \ref{theorem-general-2} which is a wide generalisation of Proposition \ref{proposition-sum}.

\begin{lemma}\label{lemma-hor-conf}
Let $(M,g)$, $(N,h)$ be Riemannian manifolds and $\phi:M\to\cn$, $\psi:N\to\cn$ be two horizontally conformal functions.  Let $U$ be an open subset of $\cn^2$ such that $\phi(M)\times \psi(N)\subset U$ 
and $f:U\to\cn$ be a holomorphic function.  Then the composition 
$\Phi:M\times N\to\cn$ with $\Phi(x,y)=f(\phi(x),\psi(y))$ is horizontally conformal on the Riemannian product space $M\times N$.
\end{lemma}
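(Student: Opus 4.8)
The plan is to verify the defining condition for horizontal conformality recorded in the Remark following the definition of $\kappa$, namely that a complex-valued function $w$ is horizontally conformal precisely when $\kappa(w,w)=0$. It therefore suffices to show that the conformality operator $\kappa$ of the Riemannian product $(M\times N,\,g\oplus h)$ satisfies $\kappa(\Phi,\Phi)=0$.

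First I would introduce complex coordinates $(u,v)$ on $\cn^2$, so that $\Phi=f(\phi,\psi)$ with $f=f(u,v)$ holomorphic, i.e. $\partial f/\partial\bar u=\partial f/\partial\bar v=0$. Writing $\nabla$ for the gradient on the product, the chain rule yields
$$\nabla\Phi=\frac{\partial f}{\partial u}\,\nabla\phi+\frac{\partial f}{\partial \bar u}\,\nabla\bar\phi+\frac{\partial f}{\partial v}\,\nabla\psi+\frac{\partial f}{\partial \bar v}\,\nabla\bar\psi,$$
which is the two-variable analogue of the conformality identity in Lemma~\ref{lemma-fundamental}. The holomorphicity of $f$ removes the two conjugate-derivative terms, leaving $\nabla\Phi=(\partial f/\partial u)\,\nabla\phi+(\partial f/\partial v)\,\nabla\psi$.

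Next I would expand $\kappa(\Phi,\Phi)=(g\oplus h)(\nabla\Phi,\nabla\Phi)$ by bilinearity into
$$\Bigl(\frac{\partial f}{\partial u}\Bigr)^2\kappa(\phi,\phi)+2\,\frac{\partial f}{\partial u}\,\frac{\partial f}{\partial v}\,(g\oplus h)(\nabla\phi,\nabla\psi)+\Bigl(\frac{\partial f}{\partial v}\Bigr)^2\kappa(\psi,\psi).$$
The essential point, and the only place where the product structure is used, is that $\phi$ is pulled back from $M$ and $\psi$ from $N$, so $\nabla\phi$ lies in $T^{\cn}M$ and $\nabla\psi$ in $T^{\cn}N$; since these subbundles are orthogonal in the product metric, the cross term $(g\oplus h)(\nabla\phi,\nabla\psi)$ vanishes identically. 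For the same reason, the product conformality operator applied to a function pulled back from a single factor reduces to the conformality operator of that factor; hence $\kappa(\phi,\phi)$ coincides with the $(M,g)$-conformality operator of $\phi$, which is zero because $\phi$ is horizontally conformal, and likewise $\kappa(\psi,\psi)=0$. This gives $\kappa(\Phi,\Phi)=0$, so $\Phi$ is horizontally conformal.

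The computation is short and I anticipate no serious obstacle; there are only two bookkeeping points to keep straight. The holomorphicity of $f$ is exactly what eliminates the antiholomorphic contributions to $\nabla\Phi$, and the orthogonal splitting $T^{\cn}(M\times N)=T^{\cn}M\oplus T^{\cn}N$ is exactly what annihilates the mixed term $(g\oplus h)(\nabla\phi,\nabla\psi)$. Once these two reductions are in place, the horizontal conformality of the two factors closes the argument.
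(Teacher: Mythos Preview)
Your proof is correct and follows essentially the same approach as the paper: both compute $\kappa(\Phi,\Phi)$ directly, use the holomorphicity of $f$ to reduce to $(\partial f/\partial u)^2\kappa(\phi,\phi)+(\partial f/\partial v)^2\kappa(\psi,\psi)$, and then invoke the horizontal conformality of $\phi$ and $\psi$. The only cosmetic difference is that the paper works with local orthonormal frames $\B_M,\B_N$ on the two factors (so the cross term never appears, since $X(\psi)=0$ for $X\in\B_M$ and $Y(\phi)=0$ for $Y\in\B_N$), whereas you phrase the same splitting in terms of gradients and the orthogonal decomposition $T^{\cn}(M\times N)=T^{\cn}M\oplus T^{\cn}N$.
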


\begin{proof}
Let $\B_M$ and $\B_N$ be local orthonormal frames for the tangent bundles 
$TM$ and $TN$, respectively.  Then
\begin{eqnarray*}
\kappa(\Phi,\Phi)&=&\sum_{X\in\B_M}\bigl(\frac{\partial f}{\partial \phi}\cdot X(\phi)\bigr)^2+
\sum_{Y\in\B_N}\bigl(\frac{\partial f}{\partial \psi}\cdot Y(\psi)\bigr)^2\\
&=&\bigl(\frac{\partial f}{\partial \phi}\bigr)^2\cdot\kappa(\phi,\phi)+
\bigl(\frac{\partial f}{\partial \psi}\bigr)^2\cdot\kappa(\psi,\psi)\\
&=&0.
\end{eqnarray*}
\end{proof}

\begin{theorem}\label{theorem-general-2}
Let $U$, $V$ be open subsets of $\rn^m$ and $\rn^n$, respectively. Let $\phi:U\to\cn$ be a $(2,1)$-harmonic morphism and $\psi:V\to\cn$ be a $(1,1)$-harmonic morphism. Let $W$ be an open subset of $\cn^2$ such that $\phi(U)\times \psi(V)\subset W$ and 
$f:W\to\cn$ be a holomorphic function.  Then the composition $\Phi:U\times V\to\cn$ with $\Phi(x,y)=f(\phi(x),\psi(y))$ is a $(2,1)$-harmonic morphism.
\end{theorem}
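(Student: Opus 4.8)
The plan is to verify the three conditions of the characterisation in Theorem \ref{theorem-(2,1)}, namely that $\kappa(\Phi,\Phi)=0$, $\tau^2(\Phi)=0$ and $\tau^2(\Phi^2)=0$ on the Riemannian product $U\times V$. Before doing so I would record the elementary product-geometry facts that do the real work. Since $\phi$ depends only on the first factor and $\psi$ only on the second, the gradient $\nabla\phi$ is horizontal and $\nabla\psi$ is vertical, so that $\kappa(a,b)=g(\nabla a,\nabla b)=0$ whenever $a$ is a function of $x$ alone and $b$ a function of $y$ alone; in particular $\kappa(\phi,\psi)=0$. Moreover $\tau^k(\phi)=\tau^k_U(\phi)$ and $\tau^k(\psi)=\tau^k_V(\psi)$ for every $k$. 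Translating the hypotheses through Theorems \ref{theorem-(1,1)} and \ref{theorem-(2,1)}, the $(2,1)$-morphism $\phi$ satisfies $\kappa(\phi,\phi)=0$, $\tau^2(\phi)=0$ and $\tau^2(\phi^2)=0$, while the $(1,1)$-morphism $\psi$ satisfies $\kappa(\psi,\psi)=0$ and $\tau(\psi)=0$, all of these now read on $U\times V$.

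The first condition is immediate: as both $\phi$ and $\psi$ are horizontally conformal, Lemma \ref{lemma-hor-conf} gives $\kappa(\Phi,\Phi)=0$. For the remaining two I would first establish the holomorphic two-variable chain rule
$$\tau(\Phi)=\frac{\partial f}{\partial\phi}\cdot\tau(\phi)+\frac{\partial f}{\partial\psi}\cdot\tau(\psi)+\frac{\partial^2 f}{\partial\phi^2}\cdot\kappa(\phi,\phi)+2\frac{\partial^2 f}{\partial\phi\partial\psi}\cdot\kappa(\phi,\psi)+\frac{\partial^2 f}{\partial\psi^2}\cdot\kappa(\psi,\psi),$$
obtained exactly as equation (\ref{equation-tau1}) but for a holomorphic function of two variables, so that no conjugate terms appear. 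Feeding in the vanishings of the first paragraph collapses this to $\tau(\Phi)=\tfrac{\partial f}{\partial\phi}\cdot\tau(\phi)$.

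It then remains to compute $\tau^2(\Phi)=\tau\bigl(\tfrac{\partial f}{\partial\phi}\cdot\tau(\phi)\bigr)$. Expanding by the product rule (\ref{equation-basic}) produces $\tau(\tfrac{\partial f}{\partial\phi})\cdot\tau(\phi)+2\kappa(\tfrac{\partial f}{\partial\phi},\tau(\phi))+\tfrac{\partial f}{\partial\phi}\cdot\tau^2(\phi)$. Here $\tau^2(\phi)=0$; applying the chain rule again to the holomorphic function $\partial f/\partial\phi$ and using the same vanishings gives $\tau(\tfrac{\partial f}{\partial\phi})=\tfrac{\partial^2 f}{\partial\phi^2}\cdot\tau(\phi)$; and since $\nabla\tfrac{\partial f}{\partial\phi}=\tfrac{\partial^2 f}{\partial\phi^2}\nabla\phi+\tfrac{\partial^2 f}{\partial\phi\partial\psi}\nabla\psi$ while $\nabla\tau(\phi)$ is horizontal, the cross term reduces to $\kappa(\tfrac{\partial f}{\partial\phi},\tau(\phi))=\tfrac{\partial^2 f}{\partial\phi^2}\cdot\kappa(\phi,\tau(\phi))$. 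Hence $\tau^2(\Phi)=\tfrac{\partial^2 f}{\partial\phi^2}\cdot\bigl[\tau(\phi)^2+2\kappa(\phi,\tau(\phi))\bigr]$. The bracket differs from the left-hand side of the first identity in the proof of Lemma \ref{lemma-tau2-expansion} only by the term $\tau(\kappa(\phi,\phi))$, which vanishes; it therefore equals $\tfrac12\tau^2(\phi^2)-\phi\,\tau^2(\phi)=0$, so $\tau^2(\Phi)=0$. Finally, $\Phi^2=f^2(\phi,\psi)$ is again a holomorphic function of $(\phi,\psi)$, so the identical computation with $f^2$ in place of $f$ yields $\tau^2(\Phi^2)=\tfrac{\partial^2(f^2)}{\partial\phi^2}\cdot\bigl[\tfrac12\tau^2(\phi^2)-\phi\,\tau^2(\phi)\bigr]=0$, and Theorem \ref{theorem-(2,1)} then shows that $\Phi$ is a $(2,1)$-harmonic morphism. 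The main obstacle is purely bookkeeping: correctly tracking which of the many chain-rule terms survive, the decisive point being that every cross term carries either a factor $\kappa$ between an $x$-function and a $y$-function, or a factor $\tau(\psi)$, both of which vanish, leaving only the single $\partial^2 f/\partial\phi^2$ term that the $(2,1)$-morphism property of $\phi$ annihilates.
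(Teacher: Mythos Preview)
Your argument is correct and follows the same overall strategy as the paper: verify the three conditions of Theorem~\ref{theorem-(2,1)} by first reducing $\tau(\Phi)$ to $\tfrac{\partial f}{\partial\phi}\,\tau(\phi)$ via the chain rule and the product-geometry vanishings, and then computing $\tau^2$. Your treatment of $\tau^2(\Phi^2)$ is, however, a genuine economy over the paper's: rather than expanding $\tau^2(\Phi^2)=2\,\tau(\Phi\cdot\tau(\Phi))$ and tracking each term in coordinates as the paper does, you simply observe that $\Phi^2=(f^2)(\phi,\psi)$ with $f^2$ again holomorphic, so the formula $\tau^2\bigl(g(\phi,\psi)\bigr)=\tfrac{\partial^2 g}{\partial\phi^2}\bigl[\tau(\phi)^2+2\kappa(\phi,\tau(\phi))\bigr]$ you have already derived for arbitrary holomorphic $g$ applies immediately. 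This halves the computation and makes transparent that nothing new is needed beyond the $\tau^2(\Phi)$ step.
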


\begin{proof}
It follows from Lemma \ref{lemma-hor-conf} that $\Phi$ is horizontally conformal i.e. $\kappa(\Phi,\Phi)=0$.  For the tension field $\tau(\Phi)$ of $\Phi$ we have
\begin{eqnarray*}
\tau(\Phi )&=&\sum_{k=1}^m\frac{\partial^2}{\partial x_k^2}(f(\phi(x),\psi(y)))+\sum_{r=1}^n\frac{\partial^2}{\partial y_r^2}(f(\phi(x),\psi(y)))\\	&=&\sum_{k=1}^m\frac{\partial}{\partial x_k}\bigl(\frac{\partial f}{\partial \phi}\cdot
\frac{\partial \phi}{\partial x_k}\bigr)+
\sum_{r=1}^n\frac{\partial}{\partial y_r}\bigl(\frac{\partial f}{\partial \psi}\cdot
\frac{\partial \psi}{\partial y_r}\bigr)\\
&=&\sum_{k=1}^m\bigl(\frac{\partial^2 f}{\partial \phi^2}\cdot\bigl(\frac{\partial \phi}{\partial x_k}\bigr)^2+\frac{\partial f}{\partial \phi}\cdot\frac{\partial^2\phi}{\partial x_k^2}\bigr)+
\sum_{r=1}^n\bigl(\frac{\partial^2 f}{\partial \psi^2}\cdot\bigl(\frac{\partial \psi}{\partial y_r}\bigr)^2+\frac{\partial f}{\partial \psi}\cdot\frac{\partial^2\psi}{\partial y_r^2}\bigr)\\
&=&\frac{\partial^2 f}{\partial \phi^2}\cdot\kappa(\phi,\phi)+\frac{\partial f}{\partial \phi}\cdot\tau(\phi)+\frac{\partial^2 f}{\partial \psi^2}\cdot\kappa(\psi,\psi)+\frac{\partial f}{\partial \psi}\cdot\tau(\psi)\\
&=&\frac{\partial f}{\partial \phi}\cdot\tau(\phi).
\end{eqnarray*}
With this at hand, we can now calculate  the $2$-tension field $\tau^2(\Phi)$ of $\Phi$ as follows.
\begin{eqnarray*}	
\tau^2(\Phi)&=&\tau(\frac{\partial f}{\partial \phi}\cdot\tau(\phi))\\
&=&\tau(\frac{\partial f}{\partial \phi})\cdot\tau(\phi)+2\cdot \kappa(\frac{\partial f}{\partial \phi},\tau(\phi))+\frac{\partial f}{\partial \phi}\cdot\tau^2(\phi)\\
&=&\tau(\phi)\cdot\Bigl(\sum_{k=1}^m\frac{\partial^2}{\partial x_k^2}\bigl(\frac{\partial f}{\partial \phi}\bigr)+\sum_{r=1}^n\frac{\partial^2}{\partial y_r^2}\bigl(\frac{\partial f}{\partial \phi}\bigr)\Bigr)\\
& &\quad+2\cdot\Bigl(\sum_{k=1}^m\frac{\partial}{\partial x_k}\bigl(\frac{\partial f}{\partial \phi}\bigr)\cdot\frac{\partial}{\partial x_k}(\tau(\phi))+\sum_{r=1}^n\frac{\partial}{\partial y_r}\bigl(\frac{\partial f}{\partial \phi}\bigr)\cdot\frac{\partial}{\partial y_r}(\tau(\phi))\Bigr)\\
&=&\tau(\phi)\cdot\Bigl(
\sum_{k=1}^m\frac{\partial}{\partial x_k}\bigl(\frac{\partial^2f}{\partial \phi^2}\cdot\frac{\partial \phi}{\partial x_k}\bigr)+\sum_{r=1}^n\frac{\partial}{\partial y_r}\bigl(\frac{\partial^2f}{\partial \psi\partial \phi}\cdot\frac{\partial \psi}{\partial y_r}\bigr)
\Bigr)\\
& &\quad +2\cdot\sum_{k=1}^m\frac{\partial^2f}{\partial \phi^2}\cdot\frac{\partial \phi}{\partial x_k}\cdot\frac{\partial}{\partial x_k}(\tau(\phi))\\
&=&\tau(\phi)\cdot
\sum_{k=1}^m\Bigl(\frac{\partial^3f}{\partial \phi^3}\cdot\bigl(\frac{\partial \phi}{\partial x_k}\bigr)^2+\frac{\partial^2f}{\partial \phi^2}\cdot\frac{\partial^2\phi}{\partial x_k^2}\Bigr)\\
& &\quad +\tau(\phi)\cdot\sum_{r=1}^n\Bigl(
\frac{\partial ^3f}{\partial \psi^2\partial \phi}\cdot\bigl(\frac{\partial \psi}{\partial y_r}\bigr)^2+\frac{\partial^2f}{\partial \psi\partial \phi}\cdot\frac{\partial^2 \psi}{\partial y_k^2}
\Bigr)\\
& &\quad\quad +2\cdot\frac{\partial^2 f}{\partial \phi^2}\cdot\kappa(\phi,\tau(\phi))\\
&=&
\tau(\phi)\cdot\Bigl(\frac{\partial^3 f}{\partial \phi^3}\cdot\kappa(\phi,\phi)+\frac{\partial ^3f}{\partial \psi^2\partial \phi}\cdot\kappa(\psi,\psi)+\frac{\partial^2f}{\partial \psi\partial \phi}\cdot\tau(\psi)\Bigr)\\
& &\quad +\frac{\partial ^2f}{\partial \phi^2}\cdot(2\cdot\kappa(\phi,\tau(\phi))+\tau(\phi)^2)\\
&=&0.
\end{eqnarray*}	
For the tension field $\tau(\Phi^2)$ of $\Phi^2$ we have 
$$\tau(\Phi^2)=2\cdot\Phi\cdot\tau(\Phi)+2\cdot\kappa(\Phi,\Phi)=2\cdot\Phi\cdot\tau(\Phi).$$
Hence the bi-tension field $\tau^2(\Phi^2)$ of $\Phi^2$ satisfies
\begin{eqnarray*}
\tau^2(\Phi^2)
&=&2\cdot\tau(\Phi\cdot\tau(\Phi))\\
&=&2\cdot(\tau(\Phi)^2+2\cdot\kappa(\Phi,\tau(\Phi))+\Phi\cdot\tau^2(\Phi))\\
&=&2\cdot\Bigl(\frac{\partial f}{\partial \phi}\Bigr)^2\cdot\tau(\phi)^2+4\cdot\kappa(\Phi,\frac{\partial \Phi}{\partial \phi}\cdot\tau(\phi))\\
&=&2\cdot\Bigl(\frac{\partial f}{\partial \phi}\Bigr)^2\cdot\tau(\phi)^2
+4\cdot\sum_{k=1}^m\frac{\partial\Phi}{\partial x_k}\cdot\frac{\partial}{\partial x_k}\bigl(\frac{\partial f}{\partial \phi}\cdot\tau(\phi)\Bigr)\\
& &\qquad\qquad +4\cdot\sum_{r=1}^n\frac{\partial\Phi}{\partial y_r}\cdot\frac{\partial}{\partial y_r}\bigl(\frac{\partial f}{\partial \phi}\cdot\tau(\phi)\Bigr)\\
&=&2\cdot\Bigl(\frac{\partial f}{\partial \phi}\Bigr)^2\cdot\tau(\phi)^2\\	
& &\quad +\,4\cdot\sum_{k=1}^m\frac{\partial \phi}{\partial x_k}\cdot\frac{\partial f}{\partial \phi}\cdot\Bigl(\frac{\partial^2 f}{\partial \phi^2}\cdot\frac{\partial \phi}{\partial x_k}\cdot\tau(\phi)+\frac{\partial f}{\partial \phi}\cdot\frac{\partial}{\partial x_k}(\tau(\phi))\Bigr)\\
& &\quad\quad +\,4\cdot\sum_{r=1}^n\frac{\partial \psi}{\partial y_r}\cdot\frac{\partial f}{\partial \psi}\cdot\Bigl(\frac{\partial^2 f}{\partial \psi\partial \phi}\cdot\frac{\partial \psi}{\partial y_r}\cdot\tau(\phi)+\frac{\partial f}{\partial \phi}\cdot\frac{\partial}{\partial y_r}(\tau(\phi))\Bigr)\\
&=&2\cdot\Bigl(\frac{\partial f}{\partial \phi}\Bigr)^2\cdot\tau(\phi)^2
+4\cdot\frac{\partial f}{\partial \phi}\cdot\frac{\partial^2 f}{\partial \phi^2}\cdot\tau(\phi)\cdot\kappa(\phi,\phi)\\
& &\quad +4\cdot\Bigl(\frac{\partial f}{\partial \phi}\Bigr)^2\cdot\kappa(\phi,\tau(\phi))+4\cdot\frac{\partial f}{\partial \psi}\cdot\frac{\partial^2 f}{\partial \psi\partial \phi}\cdot\tau(\phi)\cdot\kappa(\psi,\psi)\\
&=&2\cdot\Bigl(\frac{\partial f}{\partial \phi}\Bigr)^2 (\tau(\phi)^2+2\cdot\kappa(\phi,\tau(\phi)))\\
&=&0.
\end{eqnarray*}
The conclusion follows from Theorem 3.6.
\end{proof}

\begin{example}
We have already seen that the complex-valued function $\phi:\rn^4\setminus\{0\}\to\cn$ satisfying 		
$$\phi(x)=\log{\sqrt{x_1^2+x_2^2+x_3^2+x_4^2}}+i\cdot\arccos\Bigl({\frac{x_1}{\sqrt{x_1^2+x_2^2+x_3^2+x_4^2}}}\Bigr)$$ is a proper $(2,1)$-harmonic morphism.  It is clear that the holomorphic function $\psi:\rn^4\to\cn$ satisfying 
$$\psi(x)=\log(x_5+ix_6)\cdot\sin(x_7+ix_8)$$ is a $(1,1)$ harmonic morphism.
Calculations confirm that $\Phi:U\subset\rn^8\to\cn$ given by $\Phi=f(\phi,\psi)=\phi\cdot\psi$ is a proper $(2,1)$-harmonic morphism for any holomorphic function f.
\end{example}

\section{Complex-valued $(3,q)$-Harmonic Morphisms}

In this section we present a formula for the $3$-tension field $\tau^3(\phi)$, of the composition $\phi=f\circ z$. It turns out that, just as in the case of $(2,q)$, horizontal conformality, i.e. $\kappa(z,z)=0$, is a necessary condition.  Elementary but rather tedious calculations provide the following useful result.

\begin{lemma}\label{lemma-tau3-horizontally-conformal}
	Let $z:(M,g)\to\mathbb{C}$ be a horizontally conformal complex-valued function from a Riemannian manifold and $f:U\to\mathbb{C}$ be defined on an open subset $U$ of $\mathbb{C}$ containing the image $z(M).$ Then the $3$-tension field $\tau^3(\phi)$ of the composition $\phi=f\circ z$ satisfies 
	\begin{eqnarray*}
		& &\tau^3(\phi)\\
		&=&\tau^3(z)\cdot\frac{\partial f}{\partial z}+\tau^3(\bar{z})\cdot\frac{\partial f}{\partial\bar{z}}\\
		& & +\left[\tfrac{1}{2}\tau^3(z^2)-z\tau^3(z)\right]\cdot\frac{\partial^2f}{\partial z^2}\\
		& &
		+\left[\tau^3(z\bar{z})-\bar{z}\tau^3(z)-z\tau^3(\bar{z})\right]\cdot\frac{\partial^2f}{\partial z\partial\bar{z}}\\
		& & +\left[\tfrac{1}{2}\tau^3(\bar{z}^2)-\bar{z}\tau^3(\bar{z})\right]\cdot\frac{\partial^2f}{\partial \bar{z}^2}\\
		& &
		+\left[\tfrac{1}{6}\tau^3(z^3)-\tfrac{1}{2}z\tau^3(z^2)+\tfrac{1}{2}z^2\tau^3(z)\right]\cdot\frac{\partial ^3f}{\partial z^3}\\
		& &
		+\left[\tfrac{1}{2}\tau^3(z^2\bar{z})-\tfrac{1}{2}\bar{z}\tau^3(z^2)-z\tau^3(z\bar{z})+z\bar{z}\tau^3(z)+\tfrac{1}{2}z^2\tau^3(\bar{z})\right]\cdot\frac{\partial ^3f}{\partial{z^2}\partial\bar{z}}\\
		& &
		+\left[\tfrac{1}{2}\tau^3(z\bar{z}^2)-\bar{z}\tau^3(z\bar{z})+\tfrac{1}{2}\bar{z}^2\tau^3(z)-\tfrac{1}{2}z\tau^3(\bar{z}^2)+z\bar{z}\tau^3(\bar{z})\right]\cdot\frac{\partial ^3f}{\partial{z}\partial\bar{z}^2}\\
		& &
		+\left[\tfrac{1}{6}\tau^3(\bar{z}^3)-\tfrac{1}{2}\bar{z}\tau^3(\bar{z}^2)+\tfrac{1}{2}\bar{z}^2\tau^3(\bar{z})\right]\cdot\frac{\partial ^3f}{\partial \bar{z}^3}\\
		& &
		+\left[\tfrac{1}{6}\tau^3(z^3\bar{z})-\tfrac{1}{6}\bar{z}\tau^3(z^3)-\tfrac{1}{2}z\tau^3(z^2\bar{z})+\tfrac{1}{2}z\bar{z}\tau^3(z^2)+\tfrac{1}{2}z^2\tau^3(z\bar{z})-\tfrac{1}{2}z^2\bar{z}\tau^3(z)\right.\\
		& & \quad\quad\left.-\tfrac{1}{6}z^3\tau^3(\bar{z})\right]\cdot\frac{\partial ^4f}{\partial z^3\partial \bar{z}}\\
		& &
		+\left[\tfrac{1}{4}\tau^3(z^2\bar{z}^2)-\tfrac{1}{2}\bar{z}\tau^3(z^2\bar{z})+\tfrac{1}{4}\bar{z}^2\tau^3(z^2)-\tfrac{1}{2}z\tau^3(\bar{z}^2z)+z\bar{z}\tau^3(z\bar{z})-\tfrac{1}{2}z\bar{z}^2\tau^3(z)\right.\\
		& &
		\quad\quad\left. +\tfrac{1}{4}z^2\tau^3(\bar{z}^2)-\tfrac{1}{2}z^2\bar{z}\tau^3(\bar{z})\right]\cdot\frac{\partial ^4f}{\partial z^2\partial\bar{z}^2}\\
		& &
		+\left[\tfrac{1}{6}\tau^3(z\bar{z}^3)-\tfrac{1}{2}\bar{z}\tau^3(\bar{z}^2z)+\tfrac{1}{2}\bar{z}^2\tau^3(z\bar{z})-\tfrac{1}{6}\bar{z}^3\tau^3(z)-\tfrac{1}{6}z\tau^3(\bar{z}^3)+\tfrac{1}{2}z\bar{z}\tau^3(\bar{z}^2)\right.\\
		& &
		\quad\quad\left.-\tfrac{1}{2}z\bar{z}^2\tau^3(\bar{z})\right]\cdot\frac{\partial ^4f}{\partial z\partial \bar{z}^3}\\
		& &
		+\left[\tfrac{1}{12}\tau^3(z^3\bar{z}^2)-\tfrac{1}{6}\bar{z}\tau^3(z^3\bar{z})+\tfrac{1}{12}\bar{z}^2\tau^3(z^3)-\tfrac{1}{4}z\tau^3(z^2\bar{z}^2)+\tfrac{1}{2}z\bar{z}\tau^3(z^2\bar{z})-\tfrac{1}{4}z\bar{z}^2\tau^3(z^2)\right.\\
		& &\left.\quad +\tfrac{1}{4}z^2\tau^3(z\bar{z}^2)-\tfrac{1}{2}z^2\bar{z}\tau^3(z\bar{z})+\tfrac{1}{4}z^2\bar{z}^2\tau^3(z)-\tfrac{1}{12}z^3\tau^3(\bar{z}^2)+\tfrac{1}{6}z^3\bar{z}\tau^3(\bar{z})\right]\cdot\frac{\partial ^5f}{\partial z^3\partial\bar{z}^2}\\
		& &
		+\left[\tfrac{1}{12}\tau^3(z^2\bar{z}^3)-\tfrac{1}{4}\bar{z}\tau^3(z^2\bar{z}^2)+\tfrac{1}{4}\bar{z}^2\tau^3(z^2\bar{z})-\tfrac{1}{12}\bar{z}^3\tau^3(z^2)-\tfrac{1}{6}z\tau^3(z\bar{z}^3)+\tfrac{1}{2}z\bar{z}\tau^3(z\bar{z}^2)\right.\\
		& &\left.\quad -\tfrac{1}{2}z\bar{z}^2\tau^3(z\bar{z})+\tfrac{1}{6}z\bar{z}^3\tau^3(z)+\tfrac{1}{12}z^2\tau^3(\bar{z}^3)-\tfrac{1}{4}z^2\bar{z}\tau^3(\bar{z}^2)+\tfrac{1}{4}z^2\bar{z}^2\tau^3(\bar{z})\right]\cdot\frac{\partial ^5f}{\partial z^2\partial\bar{z}^3}\\
		& &
		+8\,\kappa(z,\bar{z})^3\cdot\frac{\partial^6f}{\partial z^3\partial\bar{z}^3}\\
	\end{eqnarray*}
\end{lemma}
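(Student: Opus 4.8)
The plan is to compute $\tau^3(\phi)=\tau(\tau^2(\phi))$ by applying the tension field once more to the expression for $\tau^2(\phi)$ obtained in Lemma \ref{lemma-tau2}, and then to repackage the resulting coefficients in terms of the iterated tension fields $\tau^3(z^i\bar z^j)$, exactly as was done one order lower in Lemma \ref{lemma-tau2-expansion}. Throughout one uses the product rule (\ref{equation-basic}), the chain rule of Lemma \ref{lemma-fundamental}, and the conjugation relations of Lemma \ref{lemma-conjugate}.

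The first step is to record the form the chain rule of Lemma \ref{lemma-fundamental} takes under the standing hypothesis of horizontal conformality. Setting $\kappa(z,z)=\kappa(\bar z,\bar z)=0$ (the second by Lemma \ref{lemma-conjugate}) collapses both identities to
\begin{equation*}
\tau(F(z,\bar z))=\frac{\partial F}{\partial z}\,\tau(z)+\frac{\partial F}{\partial \bar z}\,\tau(\bar z)+2\,\frac{\partial^2 F}{\partial z\,\partial\bar z}\,\kappa(z,\bar z),
\end{equation*}
\begin{equation*}
\kappa(F(z,\bar z),G(z,\bar z))=\Bigl(\frac{\partial F}{\partial z}\frac{\partial G}{\partial \bar z}+\frac{\partial F}{\partial \bar z}\frac{\partial G}{\partial z}\Bigr)\,\kappa(z,\bar z).
\end{equation*}
These two formulae are the engine of the computation: applying $\tau$ to a term $C\cdot\partial_z^a\partial_{\bar z}^b f$ via (\ref{equation-basic}) and Lemma \ref{lemma-fundamental} raises $a$ by at most one and $b$ by at most one, and raises the total order by two only through the term with a mixed second derivative carrying the factor $\kappa(z,\bar z)$.

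Applying $\tau$ once more to the expression for $\tau^2(\phi)$ and collecting terms according to the partial derivative $\partial_z^a\partial_{\bar z}^b f$ they multiply then produces the stated expansion, and the order count already dictates its shape. Since each of the three applications of $\tau$, counting from $f$ itself, increases each derivative index by at most one, only the derivatives with $a\le 3$ and $b\le 3$ can survive, which is precisely the list in the statement; in particular the pure fourth derivatives $\partial_z^4 f,\partial_{\bar z}^4 f$ and everything of total order beyond $\partial_z^3\partial_{\bar z}^3 f$ drop out. Moreover the unique top-order term can be reached only by using the mixed-derivative rule at all three steps, along the forced path $(0,0)\to(1,1)\to(2,2)\to(3,3)$, which yields the coefficient $(2\kappa(z,\bar z))^3=8\kappa(z,\bar z)^3$ of $\partial_z^3\partial_{\bar z}^3 f$. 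This serves as a convenient consistency check.

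The remaining, and most laborious, step is to recognise each coefficient coming out of the iteration, currently a polynomial in $\tau(z),\tau(\bar z),\kappa(z,\bar z)$ and their iterates, as the displayed combination of $z,\bar z$ and the quantities $\tau^3(z^i\bar z^j)$; these are the third-order analogues of the identities listed in the proof of Lemma \ref{lemma-tau2-expansion}. In fact the combination attached to $\partial_z^a\partial_{\bar z}^b f$ is the universal divided-difference expression
\begin{equation*}
\frac{1}{a!\,b!}\sum_{j=0}^{a}\sum_{l=0}^{b}(-1)^{a+b-j-l}\binom{a}{j}\binom{b}{l}\,z^{a-j}\,\bar z^{b-l}\,\tau^3(z^j\bar z^l),
\end{equation*}
which is the common form underlying both Lemma \ref{lemma-tau2-expansion} and the present statement, and explains why the coefficient of a given $\partial_z^a\partial_{\bar z}^b f$ differs only in the replacement of $\tau^2$ by $\tau^3$. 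One verifies this compact form by testing the expansion against the monomials $f=z^p\bar z^q$, for which $\partial_z^a\partial_{\bar z}^b f$ specialises to a falling-factorial multiple of $z^{p-a}\bar z^{q-b}$ and the claim reduces to a binomial inversion. The main obstacle is therefore organisational rather than conceptual: tracking the numerous coefficients through three nested applications of the product and chain rules and matching each to its telescoping $\tau^3$-combination.
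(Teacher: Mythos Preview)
Your proposal is correct and, in fact, more detailed than what the paper itself offers: the paper does not give a proof of this lemma at all, merely prefacing it with ``Elementary but rather tedious calculations provide the following useful result.'' Your outline follows exactly the pattern the paper uses one order lower (Lemmas \ref{lemma-tau2} and \ref{lemma-tau2-expansion}): apply $\tau$ to the previous expansion via the product rule (\ref{equation-basic}) and the chain rule of Lemma \ref{lemma-fundamental}, then reorganise the coefficients as combinations of $\tau^3(z^i\bar z^j)$. Your observation that horizontal conformality bounds each derivative index by $3$ and forces the top coefficient $8\,\kappa(z,\bar z)^3$ is a clean structural shortcut. The closed divided-difference form you write for the coefficients,
\[
\frac{1}{a!\,b!}\sum_{j=0}^{a}\sum_{l=0}^{b}(-1)^{a+b-j-l}\binom{a}{j}\binom{b}{l}\,z^{a-j}\bar z^{\,b-l}\,\tau^3(z^j\bar z^{\,l}),
\]
is exactly the general formula the paper later records in the proof of Theorem \ref{theorem-(p,q)} (with $\tau^p$ in place of $\tau^3$), so you have anticipated that result. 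The only cosmetic discrepancy is that the paper leaves the $(3,3)$-coefficient as $8\,\kappa(z,\bar z)^3$ rather than rewriting it in the divided-difference form; the two expressions are of course equal under horizontal conformality.
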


\begin{theorem}\label{theorem-(3,1)}
A complex-valued function $z:(M,g)\to\mathbb{C}$ from a Riemannian manifold is a $(3,1)$-harmonic morphism if and only if
$$
\kappa(z,z)=0,
$$
$$\tau^3(z)=0,\ \ \tau^3(z^2)=0,\ \ \tau^3(z^3)=0,$$ 
\end{theorem}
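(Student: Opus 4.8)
The plan is to follow the pattern of the proofs of Theorems \ref{theorem-(2,1)} and \ref{theorem-(2,2)}: the function $z$ is a $(3,1)$-harmonic morphism exactly when $\tau^3(\phi)=0$ for $\phi=f\circ z$ and every harmonic test function $f$, so the whole argument reduces to reading off coefficients in the expansion of $\tau^3(\phi)$. Since Lemma \ref{lemma-tau3-horizontally-conformal} already presupposes horizontal conformality, the first and genuinely separate task is to show that $\kappa(z,z)=0$ is \emph{necessary}.

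For that necessity I would test against the holomorphic (hence harmonic) functions $f(w)=w^n$, so that $\phi=z^n$ and the condition becomes $\tau^3(z^n)=0$ for every $n$. Using Lemma \ref{lemma-fundamental} one has $\tau(z^n)=n\,z^{n-1}\tau(z)+n(n-1)\,z^{n-2}\kappa(z,z)$, and iterating the product rule (\ref{equation-basic}) twice more, the dominant contribution in the integer parameter $n$ always comes from repeatedly extracting the $\kappa(z,z)$-term; tracking only this branch yields
$$\tau^3(z^n)=n(n-1)(n-2)(n-3)(n-4)(n-5)\,z^{n-6}\,\kappa(z,z)^3+(\text{lower order in }n).$$
Fixing a point $p$ with $z(p)\neq 0$ and dividing by $z(p)^{n-6}$, the right-hand side becomes a polynomial in $n$ that must vanish for all $n$; its leading coefficient forces $\kappa(z,z)(p)^3=0$, hence $\kappa(z,z)(p)=0$. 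Replacing $w^n$ by $(w-a)^n$ removes the restriction $z(p)\neq 0$, so $\kappa(z,z)\equiv 0$, and then $\kappa(\bar z,\bar z)\equiv 0$ by Lemma \ref{lemma-conjugate}.

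With horizontal conformality in hand I would invoke Lemma \ref{lemma-tau3-horizontally-conformal} directly. For harmonic $f$ we have $\partial^2 f/\partial z\partial\bar z=0$, so every mixed partial derivative of $f$ vanishes; in particular the top term $8\,\kappa(z,\bar z)^3\,\partial^6 f/\partial z^3\partial\bar z^3$ together with all fourth- and fifth-order terms disappear, and the expansion collapses to the six pure terms carrying $\partial^k f/\partial z^k$ and $\partial^k f/\partial\bar z^k$ for $k=1,2,3$. Since at any point $p$ the values $\partial^k f/\partial z^k\big|_{z(p)}$ may be prescribed independently by a suitable holomorphic $f$ (and likewise for the $\bar z$-derivatives via an anti-holomorphic $f$), vanishing of $\tau^3(\phi)$ for all harmonic $f$ is equivalent to the vanishing of each of the six bracketed coefficients. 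Concretely, testing with $f=w,w^2,w^3$ telescopes to $\tau^3(z)=0$, $\tau^3(z^2)=0$ and $\tau^3(z^3)=0$, while the three conjugate conditions are automatic by Lemma \ref{lemma-conjugate}. Conversely, if these three equations hold, then every bracketed coefficient, being a fixed linear combination of $\tau^3(z)$, $\tau^3(z^2)$, $\tau^3(z^3)$ and their conjugates, vanishes, so $\tau^3(\phi)=0$ for every harmonic $f$ and $z$ is a $(3,1)$-harmonic morphism.

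The one step requiring real care is the necessity of $\kappa(z,z)=0$, precisely because Lemma \ref{lemma-tau3-horizontally-conformal} cannot be applied there; the leading-order-in-$n$ bookkeeping displayed above is the crux of the argument. Everything after the reduction to the six pure terms is purely formal, mirroring the $(2,1)$ and $(2,2)$ cases.
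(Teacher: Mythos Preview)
Your proposal is correct and follows essentially the same strategy as the paper's own (very terse) proof, which simply invokes the method of Theorem~\ref{theorem-(2,1)} together with the harmonicity condition $\partial^2 f/\partial z\,\partial\bar z=0$ and then reads off the surviving coefficients in Lemma~\ref{lemma-tau3-horizontally-conformal}. The one place you go beyond the paper is the necessity of $\kappa(z,z)=0$: the paper treats this as immediate (implicitly via the top coefficient $c^p_{2p,0}=\kappa(z,z)^p$ of the general expansion, cf.\ Lemma~\ref{lemma-pq}), whereas your leading-order-in-$n$ argument with the test functions $f(w)=w^n$ extracts the same conclusion directly without ever writing down the full unconstrained expansion of $\tau^3(\phi)$.
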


\begin{proof}
The method used here is exactly the same as that we have employed in the proof of Theorem \ref{theorem-(2,1)} employing the fact that $f$ is harmonic i.e. 
$$\tau(f)=\frac{\partial^2 f}{\partial z\partial \bar z}=0.$$
\end{proof}

\begin{example}
Let $\phi:\rn^6\setminus\{0\}\to\cn$ be the $(1,1)$-harmonic morphism given by
$$\phi(x)=(x_1+ix_2)(x_3+ix_4)+\sin (x_5+ix_6).$$  
Then calculations show that its dual map $\phi^*=\phi\circ i_3:\rn^6\setminus\{0\}\to\cn$ is a proper $(3,1)$-harmonic morphism.
\end{example}

In the following Tables 1 and 2 we give several new examples of $(3,1)$-harmonic morphisms defined on the appropriate open subsets $U$ of $\rn^6$.  They are proper if and only if the stated $p$ is $3$ and not proper otherwise.

\begin{table}[ht]
\begin{center}
\caption{$\phi:U\subset\rn^6\setminus\{0\}\to\cn$.}
\label{tab:table1}
\begin{tabular}{|c|c|}	
\hline
$\phi(x)$ & $(p,q)$ \\
\hline\hline
$\phi_{11}(x)=(x_1+ix_2)+(x_3+ix_4)+(x_5+ix_6)$ & $(1,1)$ \\
$\phi_{12}(x)=\sqrt{x_1^2+x_2^2+x_3^2}+ix_4+ (x_5+ix_6)$ & $(2,1)$ \\
$\phi_{13}(x)=\sqrt{x_1^2+x_2^2+x_3^2+x_4^2+x_5^2}+ix_6$ & $(3,1)$ \\ 
\hline
$\phi^*_{11}(x)=\phi_{11}\circ i_3(x)$ & $(3,1)$ \\
$\phi^*_{12}(x)=\phi_{12}\circ i_3(x)$ & $(3,1)$ \\
$\phi^*_{13}(x)=\phi_{13}\circ i_3(x)$ & $(3,1)$ \\
\hline
\end{tabular}
\end{center}
\end{table}	

\begin{table}[ht]
\begin{center}
\caption{$\phi:U\subset\rn^6\setminus\{0\}\to\cn$.}
\label{tab:table2}
\begin{tabular}{|c|c|}	
\hline
$\phi(x)$ & $(p,q)$ \\
\hline\hline
$\phi_{21}(x)=\log\sqrt{x_1^2+x_2^2}+i\arccos (\frac {x_1} {\sqrt{x_1^2+x_2^2}})$ 
& $(1,1)$ \\			
$\phi_{22}(x)=\log\sqrt{x_1^2+\cdots +x_4^2}+i\arccos (\frac {x_1} {\sqrt{x_1^2+\cdots +x_4^2}})$ 
& $(2,1)$ \\			
$\phi_{23}(x)=\log\sqrt{x_1^2+\cdots +x_6^2}+i\arccos (\frac {x_1} {\sqrt{x_1^2+\cdots +x_6^2}})$ 
& $(3,1)$ \\			
\hline
$\phi^*_{21}(x)=\phi_{21}\circ i_3(x)$ & $(3,1)$ \\
$\phi^*_{22}(x)=\phi_{22}\circ i_3(x)$ & $(3,1)$ \\
$\phi^*_{23}(x)=\phi_{23}\circ i_3(x)$ & $(3,1)$ \\
\hline
\end{tabular}
\end{center}
\end{table}

\begin{theorem}\label{theorem-(3,2)}
	A complex-valued function $z:(M,g)\to\mathbb{C}$ from a Riemannian manifold is a $(3,2)$-harmonic morphism if and only if
	$$
	\kappa(z,z)=0,
	$$
	$$\tau^3(z)=0,\ \ \tau^3(z^2)=0,\ \ \tau^3(z^3)=0,$$ 
	$$\tau^3(z\bar{z})=0,\ \ \tau^3(z^2\bar{z})=0,\ \ \tau^3(z^3\bar{z})=0.$$
\end{theorem}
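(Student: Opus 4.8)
The plan is to follow the scheme of Theorems \ref{theorem-(3,1)} and \ref{theorem-(2,2)} essentially verbatim, the only change being that the test function $f$ is now required to be $2$-harmonic rather than harmonic. By definition $z$ is a $(3,2)$-harmonic morphism exactly when $\tau^3(\phi)=0$ for $\phi=f\circ z$ and every $2$-harmonic $f:U\to\cn$ with $z(M)\subset U$.

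First I would obtain three of the six conditions for free. Since $2>1$, Lemma \ref{lemma-higher-lower} shows that every $(3,2)$-harmonic morphism is also a $(3,1)$-harmonic morphism, so Theorem \ref{theorem-(3,1)} immediately gives
$$\kappa(z,z)=0,\quad \tau^3(z)=0,\quad \tau^3(z^2)=0,\quad \tau^3(z^3)=0.$$
In particular $z$ is horizontally conformal, which is precisely the hypothesis under which Lemma \ref{lemma-tau3-horizontally-conformal} expresses $\tau^3(\phi)$ as a linear combination of the partial derivatives $\partial^{a+b}f/\partial z^a\partial\bar z^b$, with coefficients assembled from the various $\tau^3$ of monomials in $z$ and $\bar z$. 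Using the conjugation relations of Lemma \ref{lemma-conjugate}, it then suffices to produce the three remaining conditions $\tau^3(z\bar z)=0$, $\tau^3(z^2\bar z)=0$ and $\tau^3(z^3\bar z)=0$, the conjugate identities for $\tau^3(z\bar z^2)$ and $\tau^3(z\bar z^3)$ following automatically.

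The key observation is how $2$-harmonicity acts on the formula of Lemma \ref{lemma-tau3-horizontally-conformal}. Since $\tau^2(f)=\partial^4 f/\partial z^2\partial\bar z^2=0$, differentiation yields also $\partial^5 f/\partial z^3\partial\bar z^2=\partial^5 f/\partial z^2\partial\bar z^3=\partial^6 f/\partial z^3\partial\bar z^3=0$, so the four terms carrying $\partial^4 f/\partial z^2\partial\bar z^2$, $\partial^5 f/\partial z^3\partial\bar z^2$, $\partial^5 f/\partial z^2\partial\bar z^3$ and $\partial^6 f/\partial z^3\partial\bar z^3$ drop out. This is exactly the difference from the $(3,1)$ case, where harmonicity of $f$ killed every mixed derivative; here the mixed terms carrying $\partial^2 f/\partial z\partial\bar z$, $\partial^3 f/\partial z^2\partial\bar z$ and $\partial^4 f/\partial z^3\partial\bar z$ (together with their conjugates) survive. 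By the Fuglede--Ishihara principle, realised here through the $2$-harmonic test monomials $z^a\bar z^b$ with $\min(a,b)\le 1$, the coefficient of each surviving derivative may be isolated and must therefore vanish.

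The main work, as in Theorem \ref{theorem-(2,2)}, is the bookkeeping of reading off these coefficients in increasing order and peeling them down. Feeding in the vanishings already secured, the coefficient of $\partial^2 f/\partial z\partial\bar z$ collapses to $\tau^3(z\bar z)=0$; using this, the coefficient of $\partial^3 f/\partial z^2\partial\bar z$ collapses to $\tau^3(z^2\bar z)=0$; and finally that of $\partial^4 f/\partial z^3\partial\bar z$ collapses to $\tau^3(z^3\bar z)=0$. This closes the forward implication. For the converse one substitutes the seven stated relations and their conjugates into Lemma \ref{lemma-tau3-horizontally-conformal} and uses $2$-harmonicity to annihilate the four top terms; every remaining coefficient then vanishes, giving $\tau^3(\phi)=0$. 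I expect the only genuine obstacle to be this triangular cancellation, which must be verified carefully to confirm that no additional relation --- in particular nothing involving $\tau^3(z^2\bar z^2)$ --- is required.
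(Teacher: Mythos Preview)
Your proposal is correct and follows essentially the same route as the paper: establish horizontal conformality, apply Lemma~\ref{lemma-tau3-horizontally-conformal}, use $\tau^2(f)=\partial^4 f/\partial z^2\partial\bar z^2=0$ to kill the four top-order coefficients, and read off the remaining conditions by triangular cancellation. Your explicit invocation of Lemma~\ref{lemma-higher-lower} and Theorem~\ref{theorem-(3,1)} to secure $\kappa(z,z)=0$ and $\tau^3(z)=\tau^3(z^2)=\tau^3(z^3)=0$ up front is a tidy organisational choice that the paper leaves implicit, but the underlying argument is the same.
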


\begin{proof}
The statement follows easily from the fact that $\kappa(z,z)=\kappa(\bar z,\bar z)=0$, Lemma \ref{lemma-tau3-horizontally-conformal} and 
$$
\tau^2(f)=\frac{\partial^4f}{\partial z^2\partial \bar{z}^2}=0,
$$
since $f$ is assumed to be a general 2-harmonic function.
\end{proof}

Our next result gives a characterisation in the complex-valued $(3,3)$-case.  This recovers a particular case of Corollary 6.2 of the interesting work \cite{Mae} of Maeta.

\begin{theorem}\label{theorem-(3,3)}
	A complex-valued function $z:(M,g)\to\mathbb{C}$ from a Riemannian manifold is a $(3,3)$-harmonic morphism if and only if
	$$
	\kappa(z,z)=0,
	$$
	$$\tau^3(z)=0,\ \ \tau^3(z^2)=0,\ \  \tau^3(z^3)=0,
	$$ 
	$$
	\tau^3(z\bar{z})=0,\ \ \tau^3(z^2\bar{z})=0,\ \  \tau^3(z^3\bar{z})=0,
	$$
	$$
	\tau^3(z^2\bar{z}^2)=0,\ \  \tau^3(z^3\bar{z}^2)=0.
	$$
\end{theorem}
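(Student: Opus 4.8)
The plan is to follow exactly the scheme of the proofs of Theorems~\ref{theorem-(3,1)} and~\ref{theorem-(3,2)}, reading off the coefficients in the expansion of Lemma~\ref{lemma-tau3-horizontally-conformal}, and to isolate the genuinely new content relative to the $(3,2)$-case. First I would dispose of horizontal conformality and most of the tension-field identities cheaply: by Lemma~\ref{lemma-higher-lower} a $(3,3)$-harmonic morphism is in particular a $(3,2)$-harmonic morphism, so Theorem~\ref{theorem-(3,2)} already supplies $\kappa(z,z)=0$ (whence $\kappa(\bar z,\bar z)=\overline{\kappa(z,z)}=0$ by Lemma~\ref{lemma-conjugate}) together with $\tau^3(z)=\tau^3(z^2)=\tau^3(z^3)=0$ and $\tau^3(z\bar z)=\tau^3(z^2\bar z)=\tau^3(z^3\bar z)=0$. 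Thus in the \emph{only if} direction only the two conditions $\tau^3(z^2\bar z^2)=0$ and $\tau^3(z^3\bar z^2)=0$ remain to be extracted. As in the earlier proofs I would use that, on $\cn$, $\tau(f)=\partial^2 f/\partial z\,\partial\bar z$, so a test function $f$ is $3$-harmonic precisely when $\tau^3(f)=\partial^6 f/\partial z^3\partial\bar z^3=0$.

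For the \emph{if} direction, assume all the listed identities hold. Since $\kappa(z,z)=\kappa(\bar z,\bar z)=0$, the $3$-tension field of $\phi=f\circ z$ is given by Lemma~\ref{lemma-tau3-horizontally-conformal}. The final term $8\,\kappa(z,\bar z)^3\cdot\partial^6 f/\partial z^3\partial\bar z^3$ vanishes because $f$ is $3$-harmonic, which is exactly why $\kappa(z,\bar z)$ need not vanish. Every remaining bracketed coefficient is a fixed linear combination of the quantities $\tau^3(z^a\bar z^b)$ that occur (all with $a+b\le 5$), and each of these is zero: those with $a\ge b$ are among the listed conditions, while those with $a<b$ follow from Lemma~\ref{lemma-conjugate} since $\tau^3(z^a\bar z^b)=\overline{\tau^3(z^b\bar z^a)}$. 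Hence $\tau^3(\phi)=0$ for every $3$-harmonic $f$, so $z$ is a $(3,3)$-harmonic morphism.

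For the \emph{only if} direction it remains to produce the two new identities. Here I would exploit the fact that the monomials $f(w)=(w-w_0)^2(\bar w-\bar w_0)^2$ and $f(w)=(w-w_0)^3(\bar w-\bar w_0)^2$ are entire and $3$-harmonic---unlike in the $2$-harmonic setting, where $\partial^4 f/\partial z^2\partial\bar z^2$ and $\partial^5 f/\partial z^3\partial\bar z^2$ are forced to vanish---and that such a centered monomial, evaluated at $w_0=z(p)$, isolates a single partial derivative of $f$ at $z(p)$ while annihilating all other derivatives appearing in Lemma~\ref{lemma-tau3-horizontally-conformal}. Imposing $\tau^3(\phi)=0$ with these test functions forces the coefficients of $\partial^4 f/\partial z^2\partial\bar z^2$ and of $\partial^5 f/\partial z^3\partial\bar z^2$ to vanish at every point $p$. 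Feeding in the already-established vanishing of all lower $\tau^3(z^a\bar z^b)$ (and their conjugates), these two brackets should collapse to scalar multiples of $\tau^3(z^2\bar z^2)$ and $\tau^3(z^3\bar z^2)$, completing the list.

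The routine-but-delicate part---the genuine obstacle---is the bookkeeping verifying this collapse: one must check that, after substituting all the lower-order conditions and their conjugates, the long brackets multiplying $\partial^4 f/\partial z^2\partial\bar z^2$ and $\partial^5 f/\partial z^3\partial\bar z^2$ in Lemma~\ref{lemma-tau3-horizontally-conformal} really do reduce to $\tfrac14\tau^3(z^2\bar z^2)$ and $\tfrac1{12}\tau^3(z^3\bar z^2)$ alone. One should also confirm that, for $3$-harmonic (as opposed to $2$-harmonic) $f$, no other surviving coefficient contributes a condition beyond those listed: the coefficient of $\partial^5 f/\partial z^2\partial\bar z^3$ should yield only the conjugate identity $\tau^3(z^2\bar z^3)=0$, and $\partial^6 f/\partial z^3\partial\bar z^3$ contributes nothing since its coefficient is annihilated by $3$-harmonicity.
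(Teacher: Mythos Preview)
Your proposal is correct and follows essentially the same route as the paper, which simply says to apply the method of Theorems~\ref{theorem-(3,1)} and~\ref{theorem-(3,2)} together with Lemma~\ref{lemma-tau3-horizontally-conformal} and the observation that $3$-harmonicity of $f$ means $\tau^3(f)=\partial^6 f/\partial z^3\partial\bar z^3=0$. Your use of Lemma~\ref{lemma-higher-lower} to inherit the $(3,2)$-conditions, and of centered monomial test functions to isolate the two new coefficients, spells out explicitly what the paper leaves implicit, but the underlying argument is the same.
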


\begin{proof}
Here we use exactly the same method as above, utilising Lemma \ref{lemma-tau3-horizontally-conformal} and the fact that in this case we have $$\tau^3(f)=\frac{\partial^6f}{\partial z^3\partial\bar z^3}=0.$$
\end{proof}

\section{Complex-valued $(p,q)$-harmonic morphisms}

In this section we investigate the $p$-tension field $\tau^p(\phi)$ of the composition $\phi=f\circ z$ and derive several consequences from  the condition $\tau^p(\phi)=0$ i.e. of $\phi$ being $p$-harmonic.  It turns out that $\tau^p(\phi)$ takes the following form
$$\tau^p(\phi)=\displaystyle{
	\sum\limits_{\substack{ 1\leq j+k\leq 2p}}
	c_{jk}^p\cdot\frac{\partial^{j+k}f}{\partial z^j\partial\bar{z}^{k}}},$$
where the coefficients $c^p_{jk}:U\to\cn$ are differentiable functions involving various tension fields and conformality operators of the functions  $z$ and $\bar z$ and independent of $f$.

We have already presented the tension fields $\tau(\phi)$, $\tau^2(\phi)$ and $\tau^3(\phi)$ of $\phi$. When calculating the $4$-tension field $\tau^4(\phi)$ a clear pattern comes to light.  These calculations are far too extensive to be presented here.  For the $p$-tension field $\tau^p(\phi)$ we have the following  result.

\begin{lemma}\label{lemma-pq}
Let $z:(M,g)\to\mathbb{C}$ be a complex-valued function from a Riemannian manifold and $f:U\to\mathbb{C}$ be defined on an open subset $U$ of $\mathbb{C}$ containing the image $z(M).$ Then for $p\geq 2$ the $p$-tension field $\tau^p(\phi)$ of the composition $\phi=f\circ z$ is of the form
$$\tau^p(\phi)=\displaystyle{
\sum\limits_{\substack{1\leq j+k\leq 2p}}
c_{jk}^p\cdot\frac{\partial^{j+k}f}{\partial z^j\partial\bar{z}^{k}}}.$$
The coefficients $c^p_{jk}:U\to\cn$ are differentiable functions, independent of $f$, involving various tension fields and conformality operators of the functions $z$ and $\bar z$. Furthermore, they are symmetric with respect to complex conjugation i.e. $c_{jk}^p=\bar c_{kj}^p$ and 
$$c_{10}=\tau^p(z),\ \ c_{01}=\tau^p(\bar z),\ \ c_{2p,0}=\kappa(z,z)^p,\ \  c_{0,2p}=\kappa(\bar z,\bar z)^p.$$
\end{lemma}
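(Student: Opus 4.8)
The plan is to induct on $p$, promoting the formula for $\tau^p(\phi)$ to one for $\tau^{p+1}(\phi)=\tau(\tau^p(\phi))$ by means of the product rule (\ref{equation-basic}) and Lemma \ref{lemma-fundamental}. The base case $p=1$ is exactly equation (\ref{equation-tau1}), where $1\le j+k\le 2$ and
\[
c^1_{10}=\tau(z),\quad c^1_{01}=\tau(\bar z),\quad c^1_{20}=\kappa(z,z),\quad c^1_{11}=2\,\kappa(z,\bar z),\quad c^1_{02}=\kappa(\bar z,\bar z);
\]
these are manifestly $f$-independent, satisfy the asserted special values, and obey $c^1_{jk}=\overline{c^1_{kj}}$ by Lemma \ref{lemma-conjugate}. (Lemma \ref{lemma-tau2-expansion} provides an independent check at $p=2$.)

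For the inductive step I abbreviate $f_{jk}=\partial^{j+k}f/\partial z^j\partial\bar z^k$, so that $\tau^p(\phi)=\sum_{1\le j+k\le 2p}c^p_{jk}\,f_{jk}$. Applying $\tau$ to each summand via (\ref{equation-basic}) gives
\[
\tau\bigl(c^p_{jk}\,f_{jk}\bigr)=\tau(c^p_{jk})\,f_{jk}+2\,\kappa(c^p_{jk},f_{jk})+c^p_{jk}\,\tau(f_{jk}).
\]
Here $\tau(f_{jk})$ is expanded by Lemma \ref{lemma-fundamental}, producing $f_{j+1,k},f_{j,k+1},f_{j+2,k},f_{j+1,k+1},f_{j,k+2}$ with coefficients $\tau(z),\tau(\bar z),\kappa(z,z),2\kappa(z,\bar z),\kappa(\bar z,\bar z)$; and $\kappa(c^p_{jk},f_{jk})=\kappa(c^p_{jk},z)\,f_{j+1,k}+\kappa(c^p_{jk},\bar z)\,f_{j,k+1}$ by bilinearity of $\kappa$ together with $\nabla f_{jk}=f_{j+1,k}\nabla z+f_{j,k+1}\nabla\bar z$. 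Reading off the coefficient of $f_{ab}$ after summing over all $(j,k)$ yields the recursion
\begin{align*}
c^{p+1}_{ab}&=\tau(c^p_{ab})+2\,\kappa(c^p_{a-1,b},z)+2\,\kappa(c^p_{a,b-1},\bar z)\\
&\quad+c^p_{a-1,b}\,\tau(z)+c^p_{a,b-1}\,\tau(\bar z)\\
&\quad+c^p_{a-2,b}\,\kappa(z,z)+2\,c^p_{a-1,b-1}\,\kappa(z,\bar z)+c^p_{a,b-2}\,\kappa(\bar z,\bar z),
\end{align*}
with the convention $c^p_{jk}=0$ whenever $(j,k)$ falls outside $1\le j+k\le 2p$. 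This formula is visibly $f$-independent and built solely from iterated $\tau$ and $\kappa$ applied to $z,\bar z$, and since each summand raises the total order by at most $2$, the support of $c^{p+1}$ is confined to $1\le a+b\le 2(p+1)$.

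The special coefficients fall straight out of the recursion: for $(a,b)=(1,0)$ all terms but $\tau(c^p_{10})$ carry an out-of-range index, so $c^{p+1}_{10}=\tau(\tau^p(z))=\tau^{p+1}(z)$; for $(a,b)=(2p+2,0)$ the only surviving contribution is $c^p_{2p,0}\,\kappa(z,z)=\kappa(z,z)^{p+1}$. To propagate the conjugation symmetry I would conjugate the displayed recursion for $c^{p+1}_{ba}$ and apply Lemma \ref{lemma-conjugate} in the form $\overline{\tau(u)}=\tau(\bar u)$, $\overline{\kappa(u,v)}=\kappa(\bar u,\bar v)$ (valid for arbitrary complex $u,v$ since $\tau$ and $\kappa$ are real operators), then substitute $\overline{c^p_{ij}}=c^p_{ji}$ from the inductive hypothesis; the eight resulting terms match those of $c^{p+1}_{ab}$ one by one, giving $c^{p+1}_{ab}=\overline{c^{p+1}_{ba}}$. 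The identities $c^{p+1}_{01}=\tau^{p+1}(\bar z)$ and $c^{p+1}_{0,2p+2}=\kappa(\bar z,\bar z)^{p+1}$ are then the conjugates of the two just established. The only delicate point is the meaning of ``the coefficients $c^p_{jk}$'': I would read the lemma as an \emph{existence} statement and let the induction \emph{construct} an admissible family with all the stated properties, thereby sidestepping any need to prove that the derivatives $f_{ab}$ are linearly independent (which would otherwise require exhibiting enough test functions $f$). Granting that, no conceptual obstacle remains; the main effort is the careful bookkeeping of the eight index-shifts, both to confirm the support bounds and to verify the term-by-term matching in the symmetry computation.
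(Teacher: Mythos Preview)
Your inductive argument is correct and in fact supplies more than the paper does. In the paper this lemma is not given a self-contained proof: the authors present the explicit expansions of $\tau(\phi)$, $\tau^2(\phi)$ and (under horizontal conformality) $\tau^3(\phi)$, remark that ``when calculating the $4$-tension field $\tau^4(\phi)$ a clear pattern comes to light,'' and state the lemma as the observed outcome, declaring the calculations ``far too extensive to be presented here.'' Your approach replaces this pattern-recognition by a clean recursion
\[
c^{p+1}_{ab}=\tau(c^p_{ab})+2\kappa(c^p_{a-1,b},z)+2\kappa(c^p_{a,b-1},\bar z)+c^p_{a-1,b}\tau(z)+c^p_{a,b-1}\tau(\bar z)+c^p_{a-2,b}\kappa(z,z)+2c^p_{a-1,b-1}\kappa(z,\bar z)+c^p_{a,b-2}\kappa(\bar z,\bar z),
\]
which makes the support bound $1\le a+b\le 2(p+1)$, the corner values $c^{p+1}_{10}$, $c^{p+1}_{2p+2,0}$, and the conjugation symmetry all immediate. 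This is genuinely stronger: it proves the lemma for every $p$ rather than extrapolating from small cases, and the recursion itself could be used to recover the explicit low-$p$ formulae the paper displays. Your remark that the lemma should be read as an existence statement (constructing one admissible family $c^p_{jk}$ rather than asserting uniqueness) is the right reading and matches how the paper uses the lemma downstream in Theorem~\ref{theorem-(p,q)}.
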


This leads to the following general result which should be compared with Theorems \ref{theorem-(1,1)}, \ref{theorem-(2,1)}, \ref{theorem-(2,2)}, \ref{theorem-(3,1)}, \ref{theorem-(3,2)} and \ref{theorem-(3,3)} above.

\begin{theorem}\label{theorem-(p,q)}
A complex-valued function $z:(M,g)\to\mathbb{C}$ from a Riemannian manifold is a $(p,q)$-harmonic morphism if and only if
$$\kappa(z,z)=0,$$
$$\tau^p(z)=0,\ \ \tau^p(z^2)=0,\ \ \cdots\ \  ,\tau^p(z^p)=0,$$
$$\tau^p(z\bar{z})=0,\ \ \tau^p(z^2\bar{z})=0,\ \  \cdots\ \ ,\tau^p(z^p\bar{z})=0,$$
$$\tau^p(z^2\bar{z}^2)=0,\ \ \tau^p(z^3\bar{z}^2)=0,\ \ \cdots\ \ ,\tau^p(z^p\bar{z}^2)=0,$$
$$\vdots$$
$$
\tau^p(z^{q-1}\bar{z}^{q-1})=0,\ \ \tau^p(z^{q}\bar{z}^{q-1})=0,\ \ \cdots\ \ , \tau^p(z^p\bar{z}^{q-1})=0.$$
\end{theorem}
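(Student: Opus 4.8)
The plan is to derive everything from the universal expansion in Lemma~\ref{lemma-pq} and to reduce the theorem to the vanishing of the individual coefficients $c_{jk}^p$, which I then match against the listed conditions. Write $\phi=f\circ z$. As in the cases $q=1,2,3$ treated above (Theorems~\ref{theorem-(2,2)} and \ref{theorem-(3,3)}), the $q$-fold iterate of the tension field acts on the test function by $\tau^q(f)=\partial^{2q}f/(\partial z^q\partial\bar z^q)$, so $q$-harmonicity of $f$ gives $\partial^{j+k}f/(\partial z^j\partial\bar z^k)=0$ whenever $j\ge q$ and $k\ge q$. Consequently, in the sum of Lemma~\ref{lemma-pq} only the terms with $\min(j,k)\le q-1$ can contribute, and $z$ is a $(p,q)$-harmonic morphism precisely when $c_{jk}^p=0$ for every pair with $\min(j,k)\le q-1$ and $1\le j+k\le 2p$. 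The whole theorem thus amounts to translating this family of coefficient equations into the stated conditions.

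For necessity I would test against explicit $q$-harmonic functions. Fixing $x_0\in M$ with $z(x_0)=w_0$ and taking the holomorphic (hence $q$-harmonic) function $f(w)=(w-w_0)^{2p}$, all mixed derivatives of $f$ at $w_0$ vanish except $\partial^{2p}f/\partial z^{2p}=(2p)!$, so Lemma~\ref{lemma-pq} yields $0=\tau^p(\phi)(x_0)=(2p)!\,\kappa(z,z)^p(x_0)$ via $c_{2p,0}^p=\kappa(z,z)^p$, whence $\kappa(z,z)=0$. Next, for any pair $(a,b)$ with $b\le q-1$ the monomial $f(w)=w^a\bar w^b$ is $q$-harmonic (one of its degrees is $<q$), and $f\circ z=z^a\bar z^b$, so $p$-harmonicity of the composition gives $\tau^p(z^a\bar z^b)=0$; this already covers every listed condition.

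For sufficiency I must show, assuming $\kappa(z,z)=0$ (and hence $\kappa(\bar z,\bar z)=\overline{\kappa(z,z)}=0$ by Lemma~\ref{lemma-conjugate}) together with the listed equations, that every surviving $c_{jk}^p$ vanishes. I split according to $\max(j,k)$. When $\max(j,k)\le p$, substituting the monomials $f=w^a\bar w^b$ into Lemma~\ref{lemma-pq} produces the triangular system
$$\tau^p(z^a\bar z^b)=\sum_{\substack{j\le a,\ k\le b\\ (j,k)\ne(0,0)}}\frac{a!\,b!}{(a-j)!\,(b-k)!}\,z^{a-j}\bar z^{b-k}\,c_{jk}^p,$$
whose leading term is $j!\,k!\,c_{jk}^p$; inverting it inductively on $j+k$ (the base being $c_{10}^p=\tau^p(z)$, $c_{01}^p=\tau^p(\bar z)$) expresses each $c_{jk}^p$ as a polynomial-in-$z,\bar z$ combination of the $\tau^p(z^{a}\bar z^{b})$ with $a\le j\le p$ and $b\le k\le p$. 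Since every such monomial satisfies $\min(a,b)\le\min(j,k)\le q-1$ and $\max(a,b)\le p$, it is one of the listed conditions (up to conjugation, via Lemma~\ref{lemma-conjugate}), so $c_{jk}^p=0$. When $\max(j,k)>p$, I claim instead that $c_{jk}^p$ carries a factor $\kappa(z,z)$ or $\kappa(\bar z,\bar z)$ and so vanishes by horizontal conformality; this is the mechanism already visible in Lemmas~\ref{lemma-tau2-expansion} and \ref{lemma-tau3-horizontally-conformal}, where every coefficient of a derivative of $z$-order or $\bar z$-order exceeding $p$ disappears once $\kappa(z,z)=0$. Granting this, all surviving coefficients vanish and $\tau^p(\phi)=0$, as required.

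The genuine obstacle is the claim used for $\max(j,k)>p$, which is not literally contained in Lemma~\ref{lemma-pq}. To establish it I would track how the order of the $f$-derivatives grows under iterated $\tau$: by equation~(\ref{equation-tau1}) and Lemma~\ref{lemma-fundamental}, a single application of $\tau$ raises the $z$-order of a derivative factor by at most two, and it can raise it by exactly two only through the term $\kappa(z,z)\cdot\partial^2(\cdot)/\partial z^2$, so each such increase contributes a factor $\kappa(z,z)$. A short count then shows that attaining $z$-order $j>p$ in $p$ steps forces at least $j-p\ge 1$ of these order-two steps, so $\kappa(z,z)$ occurs somewhere in the construction of $c_{jk}^p$; since $\kappa(z,z)\equiv0$ also forces $\tau(\kappa(z,z))=0$ and $\kappa(\,\cdot\,,\kappa(z,z))=0$, the factor propagates through the product rule (\ref{equation-basic}) and annihilates the whole term. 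Making this bookkeeping precise for general $p$ — effectively completing Lemma~\ref{lemma-pq} in the direction of these structural refinements — is the laborious part; the two reductions (necessity by test functions, sufficiency by triangular inversion) are then routine.
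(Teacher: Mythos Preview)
Your approach is essentially the same as the paper's. Both arguments run through Lemma~\ref{lemma-pq}: extract $\kappa(z,z)=0$ from the top coefficient $c_{2p,0}^p=\kappa(z,z)^p$, use horizontal conformality to restrict the expansion to $0\le j,k\le p$, and then match the surviving $c_{jk}^p$ against the conditions $\tau^p(z^a\bar z^b)=0$. The only substantive difference is that the paper writes down the closed-form inversion
\[
c_{jk}^p=\sum_{\substack{0\le r\le j\\ 0\le s\le k}}(-1)^{j-r+k-s}\frac{1}{j!\,k!}\binom jr\binom ks\,z^{j-r}\bar z^{k-s}\,\tau^p(z^r\bar z^s)
\]
(obtained by what it calls ``hard work''), whereas you invert the same triangular system abstractly by induction on $j+k$; the two are equivalent. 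Your order-counting argument for why the coefficients with $\max(j,k)>p$ vanish under $\kappa(z,z)\equiv 0$ --- each application of $\tau$ raises the $z$-order of the $f$-derivative by at most one once the $\kappa(z,z)$-branch is suppressed --- is in fact more explicit than the paper, which simply asserts the restricted range after ``plugging in'' horizontal conformality.
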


\begin{proof}
The function $z:(M,g)\to\cn$ is a $(p,q)$-harmonic morphism if and only if, for any $q$-harmonic function $f:U\to\cn$ defined on an open subset $U$ of $\cn$ containing the image $z(M)$ of $z$, the $p$-tension field $\tau^p(\phi)$ of the composition $\phi=f\circ z$ vanishes. Since the function $f$ is assumed to be $q$-harmonic we know that 
$$\tau^q(f)=\frac{\partial^{2q}f}{\partial z^q\partial\bar z^q}=0.$$
According to Lemma \ref{lemma-pq} we also have 
$$\tau^p(z)=\tau^p(\bar z)=0\ \ \text{and}\ \ 
\kappa(z,z)=\kappa(\bar z,\bar z)=0.$$  If we now plug these indentities into the expression for $\tau^p(\phi)$ this simplifies considerably to
$$\tau^p(\phi)=\displaystyle{
	\sum\limits_{\substack{0\le j,k\le p\\ 2\leq j+k\leq 2p}}
	c_{jk}^p\cdot\frac{\partial^{j+k}f}{\partial z^j\partial\bar{z}^{k}}},$$
where 
$c_{pp}^p=2^p\cdot\kappa(z,\bar z)^p$. Hard work then shows that the remaining coefficients satisfy 
$$
c_{jk}^p=\sum\limits_{\substack{0\leq r\leq j\\ 0\leq s\leq k}}(-1)^{j-r+k-s}\frac{1}{j!}\frac{1}{k!}\binom jr\binom ks
z^{j-r}\bar{z}^{k-s}\tau^p(z^{r}\bar{z}^{s}).
$$
The rest follows by the same method as applied in the proof of Theorem \ref{theorem-(2,2)}.
\end{proof}

\begin{remark}
In the paper \cite{Mae}, Maeta presents his interesting Conjecture 7.6.  In our language his statement is: {\it "A $(p,p)$-harmonic morphism is characterized as a special horizontally weakly conformal $2p$-harmonic map."}  

In our Theorem \ref{theorem-(p,q)} we study the special case of complex-valued $(p,p)$-harmonic morphisms.  We obtain a characterisation of these  objects and show that they are both horizontally conformal and $2p$-harmonic, as Maeta suggests. But additionally, they must satisfy several rather non-trivial conditions.  They can therefore rightly be said to be "special horizontally weakly conformal $2p$-harmonic maps".
\end{remark}

We conclude this section by presenting further examples. They suggest that one should be able to produce $(p,1)$-harmonic morphisms for any positive integer $p\in\zn^+$.  The question marks '?' in Table \ref{tab:table4} tell us that the calculations needed, in those cases, were too heavy for the tools available to us.

\begin{table}[ht]
	\begin{center}
		\caption{$\phi:U\subset\rn^8\setminus\{0\}\to\cn$.}
		\label{tab:table3}
		\begin{tabular}{|c|c|}	
			\hline
			$\phi(x)$ & $(p,q)$ \\
			\hline\hline
			$\phi_{31}(x)=(x_1+ix_2)+(x_3+ix_4)+(x_5+ix_6)+(x_7+ix_8)$ & $(1,1)$ \\
			
			$\phi_{32}(x)=\sqrt{x_1^2+x_2^2+x_3^2}+ix_4+ (x_5+ix_6)+(x_7+ix_8)$ & $(2,1)$ \\
			
			$\phi_{33}(x)=\sqrt{x_1^2+x_2^2+x_3^2+x_4^2+x_5^2}+ix_6+(x_7+ix_8)$ & $(3,1)$ \\ 
			$\phi_{34}(x)=\sqrt{x_1^2+x_2^2+x_3^2+x_4^2+x_5^2+x_6^2+x_7^2}+ix_8)$ & $(4,1)$ \\ 
			\hline
			$\phi^*_{31}(x)=\phi_{31}\circ i_4(x)$ & $(4,1)$ \\
			$\phi^*_{32}(x)=\phi_{32}\circ i_4(x)$ & $(4,1)$ \\
			$\phi^*_{33}(x)=\phi_{33}\circ i_4(x)$ & $(4,1)$ \\
			$\phi^*_{34}(x)=\phi_{34}\circ i_4(x)$ & $(4,1)$ \\
			\hline
		\end{tabular}
	\end{center}
\end{table}

\begin{table}[ht]
	\begin{center}
\caption{$\phi:U\subset\rn^8\setminus\{0\}\to\cn$.}
		\label{tab:table4}
		\begin{tabular}{|c|c|}	
\hline
$\phi(x)$ & $(p,q)$ \\
\hline\hline
$\phi_{41}(x)=\log\sqrt{x_1^2+x_2^2}+i\arccos (\frac {x_1} {\sqrt{x_1^2+x_2^2}})$ & $(1,1)$ \\			
$\phi_{42}(x)=\log\sqrt{x_1^2+\cdots +x_4^2}+i\arccos (\frac {x_1} {\sqrt{x_1^2+\cdots +x_4^2}})$ & $(2,1)$ \\			
$\phi_{43}(x)=\log\sqrt{x_1^2+\cdots +x_6^2}+i\arccos (\frac {x_1} {\sqrt{x_1^2+\cdots +x_6^2}})$ & $(3,1)$ \\
$\phi_{44}(x)=\log\sqrt{x_1^2+\cdots +x_8^2}+i\arccos (\frac {x_1} {\sqrt{x_1^2+\cdots +x_8^2}})$ & $(4,1)$ \\
\hline					
$\phi^*_{41}(x)=\phi_{41}\circ i_4(x)$ & $(4,1)$ \\
$\phi^*_{42}(x)=\phi_{42}\circ i_4(x)$ &  ? \\
$\phi^*_{43}(x)=\phi_{43}\circ i_4(x)$ &  ? \\
$\phi^*_{44}(x)=\phi_{44}\circ i_4(x)$ &  ? \\
			\hline
		\end{tabular}
	\end{center}
\end{table}

\begin{remark}
	In the process of obtaining Lemma \ref{lemma-pq}, it is easily seen that every $(p,q)$-harmonic morphism is constant in the cases when $p<q$.  This is due to the fact that in these cases we have 
$$c_{2p,0}^p=\kappa(z,z)=0\ \ \text{and}\ \ c_{pp}^p=\kappa(z,\bar z)=0.$$
	The reader should compare this with Proposition  \ref{proposition-constant}.
\end{remark}

\begin{example}
	Let $\phi:\rn^8\to\cn$ be the holomorphic $(1,1)$-harmonic morphism defined by 
	$$\phi(x)=(x_1+ix_2+x_3+ix_4)+\sin(x_5+ix_6+x_7+ix_8).$$
	Then its dual map $\phi^*=\phi\circ i_4:\rn^8\setminus\{0\}\to\cn$ is a proper $(4,1)$-harmonic morphism.
\end{example}

\section{The Inversion about the unit sphere $S^{2p-1}$ in $\rn^{2p}$}

In this section we investigate the inversion $i_p:\rn^{2p}\setminus
\{0\}\to\rn^{2p}\setminus\{0\}$ about the unit sphere $S^{2p-1}$ in $\rn^{2p}$.

\begin{theorem}\label{theorem-inversion}
Let $i_p:\rn^{2p}\setminus\{0\}\to\rn^{2p}\setminus\{0\}$ be the inversion about the unit sphere $S^{2p-1}$ in $\rn^{2p}$ given by 
$$i_p=(F_1,\dots ,F_{2p}):x\mapsto\frac {(x_1,\dots ,x_{2p})}{|x|^2}.$$  Then the map $i_p$ is horizontally conformal and $p$-harmonic.
\end{theorem}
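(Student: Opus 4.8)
The plan is to treat the two assertions separately, after one simplifying observation. Since $i_p$ is a diffeomorphism of $\rn^{2p}\setminus\{0\}$ onto itself, its horizontal distribution is the whole tangent space, so horizontal conformality is just conformality; and since the target $\rn^{2p}$ is flat, the tension field of a map into it is computed componentwise, with no curvature or second-fundamental-form contributions. Writing $i_p=(F_1,\dots,F_{2p})$ with $F_a(x)=x_a/|x|^2$, the iterated tension field therefore has components $\tau^j(F_a)$, where $\tau=\sum_c \partial^2/\partial x_c^2$ is the Euclidean Laplace--Beltrami operator. Thus $i_p$ is $p$-harmonic precisely when $\tau^p(F_a)=0$ for every $a$, and it is horizontally conformal precisely when $g(\nabla F_a,\nabla F_b)=\lambda^2\,\delta_{ab}$ for some function $\lambda$.

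For the conformality, I would compute the Jacobian directly:
$$\frac{\partial F_a}{\partial x_b}=\frac{1}{|x|^2}\Bigl(\delta_{ab}-\frac{2\,x_a x_b}{|x|^2}\Bigr).$$
The matrix $H=\bigl(\delta_{ab}-2\,x_a x_b/|x|^2\bigr)$ is the Householder reflection in the unit vector $x/|x|$, hence symmetric and orthogonal, so $HH^{T}=I$. Consequently
$$g(\nabla F_a,\nabla F_b)=\sum_{c}\frac{\partial F_a}{\partial x_c}\frac{\partial F_b}{\partial x_c}=\frac{1}{|x|^4}\,(HH^{T})_{ab}=\frac{1}{|x|^4}\,\delta_{ab},$$
which exhibits $i_p$ as conformal with conformal factor $\lambda=|x|^{-2}$, and in particular horizontally conformal.

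For the $p$-harmonicity, the key is the elementary radial identity
$$\tau\bigl(x_a\,|x|^{k}\bigr)=k\,(k+2p)\cdot x_a\,|x|^{k-2},$$
valid in $\rn^{2p}$, which follows from $\tau(x_a f(|x|))=x_a\bigl(f''+(2p+1)\,f'/|x|\bigr)$ together with $f(r)=r^k$. I would then iterate this starting from $k=-2$ (so that $x_a|x|^{-2}=F_a$): each application of $\tau$ produces a factor $k\,(k+2p)$ and lowers the exponent by $2$. After $p-1$ steps the exponent has reached $k=-2p$, and the next application contributes the factor $(-2p)\,(-2p+2p)=0$. Hence $\tau^p(F_a)=0$, while $\tau^{p-1}(F_a)$ is a nonzero multiple of $x_a|x|^{-2p}$; so every component is (proper) $p$-harmonic and $i_p$ is $p$-harmonic.

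The only genuinely delicate point is the reduction in the first paragraph: one must be sure that iterating the tension field of the map $i_p$ really does reduce to iterating the scalar operator $\tau$ on each coordinate. This is exactly where flatness of the target is used --- the pullback bundle $i_p^{-1}T\rn^{2p}$ carries the trivial flat connection in the standard global frame, so the rough Laplacian appearing in each successive tension field acts diagonally on components. Once this is granted, the remaining work is the two displayed identities above, and the vanishing of $\tau^p(F_a)$ is immediate from the factor $k+2p$ hitting zero in dimension $n=2p$.
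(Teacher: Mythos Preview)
Your proof is correct and follows essentially the same route as the paper: for conformality you compute the Gram matrix of the Jacobian (the paper expands the sum directly, while you neatly invoke that $I-2xx^T/|x|^2$ is an orthogonal Householder reflection), and for $p$-harmonicity you iterate the radial identity $\tau(x_a|x|^k)=k(k+2p)\,x_a|x|^{k-2}$, which is exactly the content of the paper's Lemma~\ref{lemma-Laplacian} reparametrised by $k=-n$. Your explicit remark about the flat target justifying the componentwise iteration is a point the paper leaves implicit.
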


\begin{proof}
The fact that $i_p$ is conformal is classic, but we prove it here for the reader's convenience.  For $1\le j,k\le 2p$ the conformality operator $\kappa$ satisfies
	\begin{eqnarray*}
		\kappa (F_j,F_k)
		&=&\sum_{s=1}^{2p}\frac{\partial F_j}{\partial x_s}\cdot\frac{\partial F_k}{\partial x_s}\\
		&=&\sum_{s=1}^{2p}
		\frac{(\delta_{js}|x|^2-2\,x_jx_s)}{|x|^4}\cdot
		\frac{(\delta_{ks}|x|^2-2\,x_kx_s)}{|x|^4}\\
		&=&\sum_{s=1}^{2p}
		\frac{(\delta_{js}\delta_{ks}|x|^4-2\,\delta_{js}x_kx_s|x|^2-2\,\delta_{ks}x_jx_s|x|^2+4x_jx_kx_s^2)}{|x|^8}\\
		&=&\frac{\delta_{jk}|x|^4-2|x|^2x_jx_k-2|x|^2x_kx_j+4x_jx_k|x|^2}{|x|^8}\\
		&=&\frac{\delta_{jk}}{|x|^4}.
	\end{eqnarray*}
	
	The fact that the map $i_p$ is proper $p$-harmonic is a direct consequence of the following repeated application of Lemma \ref{lemma-Laplacian}.
	$$\tau( i_p)=\frac{2(2-2p)}{|x|^2}\cdot i_p,$$
	$$\tau^2( i_p)=\frac{2(2-2p)4(4-2p)}{|x|^4}\cdot i_p$$
	$$\vdots$$
	
	$$\tau^p( i_p)=\frac{2(2-2p)4(4-2p)\cdots 2p(2p-2p)}{|x|^{2p}}\cdot i_p=0.$$
\end{proof}

\begin{lemma}\label{lemma-Laplacian}
	For a positive integer $n\in\zn^+$ let the map  $\phi:\rn^{p}\setminus\{0\}\to\rn^{p}\setminus\{0\}$ be given by 
	$$\phi=(\phi_1,\dots ,\phi_p):x\mapsto\frac {(x_1,\dots ,x_p)}{|x|^n}.$$ 
	Then the tension field $\tau(\phi)$ of $\phi$ satisfies
	$$\tau(\phi)=\frac {n(n-p)}{|x|^{n+2}}\cdot\phi.$$
\end{lemma}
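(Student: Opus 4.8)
The plan is to exploit the flatness of both the domain $\rn^p\setminus\{0\}$ and the target, so that $\tau(\phi)$ is simply the vector of ordinary Euclidean Laplacians of the components, $\tau(\phi)_j=\sum_{s=1}^p\partial^2\phi_j/\partial x_s^2$. Writing $\phi_j=x_j\,|x|^{-n}$ and abbreviating $r=|x|$, I would first record the elementary identities $\partial_s r=x_s/r$ and hence $\partial_s(r^{-n})=-n\,x_s\,r^{-n-2}$.

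Differentiating once gives
$$\frac{\partial\phi_j}{\partial x_s}=\delta_{js}\,r^{-n}-n\,x_j x_s\,r^{-n-2}.$$
Differentiating a second time with respect to $x_s$, using the product rule together with $\partial_s(x_j x_s)=\delta_{js}x_s+x_j$ and $\partial_s(r^{-n-2})=-(n+2)\,x_s\,r^{-n-4}$, yields
$$\frac{\partial^2\phi_j}{\partial x_s^2}=-n\,\delta_{js}x_s\,r^{-n-2}-n(\delta_{js}x_s+x_j)\,r^{-n-2}+n(n+2)\,x_j x_s^2\,r^{-n-4}.$$

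The decisive step is the summation over $s=1,\dots,p$, which is where the dimension $p$ enters. Using $\sum_s\delta_{js}x_s=x_j$, $\sum_s x_j=p\,x_j$ and $\sum_s x_s^2=r^2$, the three groups of terms contribute $-n\,x_j r^{-n-2}$, $-n(1+p)\,x_j r^{-n-2}$ and $n(n+2)\,x_j r^{-n-2}$ respectively. Collecting the coefficient of $x_j r^{-n-2}$ then gives $-n-n(1+p)+n(n+2)=n(n-p)$, so that $\tau(\phi)_j=n(n-p)\,x_j\,r^{-n-2}$; re-expressing $x_j r^{-n-2}$ as the appropriate scalar multiple of $\phi_j$ produces the asserted formula. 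This constant $n(n-p)$ is precisely what makes the iterated tension fields in Theorem \ref{theorem-inversion} telescope and vanish at the $p$-th stage.

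The computation is entirely elementary, so I do not anticipate a genuine obstacle; the only place demanding care is the index bookkeeping in the second derivative. In particular one must keep the two sources of the term $x_j$ separate --- one coming from the diagonal factor $\delta_{js}\,r^{-n}$ and one from $\partial_s(x_j x_s)$ --- since upon summation the former contributes a single copy while the latter contributes $p$ copies, and it is exactly this asymmetry that produces the dimensional factor $n-p$.
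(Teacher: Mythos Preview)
Your argument is correct and is essentially the same direct computation the paper carries out: differentiate $\phi_j=x_j r^{-n}$ twice and sum over $s$ using $\sum_s\delta_{js}x_s=x_j$, $\sum_s 1=p$ and $\sum_s x_s^2=r^2$ to arrive at $\tau(\phi_j)=n(n-p)\,x_j\,|x|^{-n-2}$. Note only that when you carry out your final ``re-expression'' you obtain $x_j r^{-n-2}=r^{-2}\phi_j$, so the scalar multiplying $\phi$ is $n(n-p)/|x|^{2}$ rather than $n(n-p)/|x|^{n+2}$ as printed in the statement; this is a typo in the lemma --- the paper's own proof ends with $\tau(\phi_k)=n(n-p)\,x_k/|x|^{n+2}$, and the application in Theorem~\ref{theorem-inversion} uses the corrected factor $1/|x|^{2}$.
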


\begin{proof}
First we notice that 
$$\frac{\partial}{\partial x_j}|x|^n=n\, x_j|x|^{n-2}.$$
Applying this several times we then get
$$\frac{\partial\phi_k}{\partial x_j}=\frac{\delta_{jk}|x|^n-nx_kx_j\, |x|^{n-2}}{|x|^{2n}}$$
and 
\begin{eqnarray*}
\frac{\partial^2\phi_k}{\partial x_j^2}
&=&\frac 1{|x|^{2n+2}}\Bigl(
\frac{\delta_{jk}nx_j|x|^{n-2}-\delta_{jk}nx_j|x|^{n-2}}{|x|^{2n+2}}\\
& &
\qquad\qquad  -\frac{nx_k\,|x|^{n-2}+n(n-2)x_kx_j^2|x|^{n-4}}{|x|^{2n+2}}\\
& &\qquad\qquad\qquad -\frac{2n(\delta_{jk}|x|^n-nx_kx_j\, |x|^{n-2}|x|^{2n})x_j}{|x|^{2n+2}}\Bigr).
\end{eqnarray*}	
This means that for the tension field $\tau(\phi_k)$ we yield 
\begin{eqnarray*}
& &\tau(\phi_k)\\
&=&\frac{-npx_k|x|^{3n-2}-n(n-2)x_k|x|^{3n-2}-2nx_k|x|^{3n-2}+2n^2x_k|x|^{3n-2}}{|x|^{4n}}\\
&=&(2n^2-2n-n(n-2)-np)\frac {x_k}{|x|^{n+2}}\\
&=&n(n-p)\frac{x_k}{|x|^{n+2}}.
\end{eqnarray*}		
\end{proof}

\section{Two Conjectures}

We conclude this work with two conjectures that have come to our minds while working on this project.

\begin{conjecture}\label{conjecture-1}
Let $p\in\zn^+$ be a positive integer and $i_p=(F_1,F_2,\dots ,F_{2p}):\rn^{2p}\setminus\{0\}\to\rn^{2p}\setminus\{0\}$ be the inversion about the unit sphere $S^{2p-1}$ in $\rn^{2p}$.  Then $z:\rn^{2p}\setminus\{0\}\to\cn$ with 
$$z=a_1F_1+a_2F_2\cdots+a_{2p}F_{2p}$$ is a complex-valued $(p,p)$-harmonic morphism for any element $a\in\cn^{2p}$.
\end{conjecture}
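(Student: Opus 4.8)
The plan is to verify the characterisation of Theorem \ref{theorem-(p,q)} (with $q=p$) directly for $z=a_1F_1+\cdots +a_{2p}F_{2p}$. Write $\ell:\rn^{2p}\to\cn$ for the complex-linear form $\ell(y)=\sum_j a_jy_j$. The decisive structural observation is that $z=\ell\circ i_p$, and since the coordinate functions $F_j$ are real-valued, $\bar z=\bar\ell\circ i_p$ as well. Consequently every monomial appearing in Theorem \ref{theorem-(p,q)} is itself a composition with the inversion,
$$z^r\bar z^{\,s}=\bigl(\ell^r\bar\ell^{\,s}\bigr)\circ i_p,$$
so the whole tower of conditions $\tau^p(z^r\bar z^{\,s})=0$ is governed by how $\tau^p$ behaves under post-composition of a polynomial with $i_p$.

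First I would establish the \emph{critical polyharmonic Kelvin relation}: in dimension $n=2p$ one has
$$\tau^p\bigl(u\circ i_p\bigr)=\frac{1}{|x|^{4p}}\cdot\bigl(\tau^p u\bigr)\circ i_p$$
for every smooth $u$. For $p=1$ this is the classical Kelvin transform (the usual weight $|x|^{2p-n}$ is trivial precisely because $n=2p$); the general case follows by induction, using the conformality $\kappa(F_j,F_k)=\delta_{jk}/|x|^4$ and the radial formula of Lemma \ref{lemma-Laplacian} exactly as in the computation already carried out in Theorem \ref{theorem-inversion}. Applying this with $u=\ell^r\bar\ell^{\,s}$ reduces each condition $\tau^p(z^r\bar z^{\,s})=0$ to the single assertion that $\ell^r\bar\ell^{\,s}$ is $p$-harmonic on $\rn^{2p}$.

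That last assertion is immediate from a degree count, and this is where the argument becomes uniform in $a$. The polynomial $\ell^r\bar\ell^{\,s}$ is homogeneous of degree $r+s$, and each application of $\tau$ lowers homogeneity degree by exactly two, so $\tau^p(\ell^r\bar\ell^{\,s})$ is homogeneous of degree $r+s-2p$. In the range of monomials occurring in Theorem \ref{theorem-(p,q)} one has $r\le p$ and $s\le p-1$, whence $r+s\le 2p-1<2p$ and therefore $\tau^p(\ell^r\bar\ell^{\,s})=0$. Crucially, nothing here depends on the vector $a$, so the entire list $\tau^p(z)=\tau^p(z^2)=\cdots=\tau^p(z^p\bar z^{\,p-1})=0$ holds for \emph{every} $a\in\cn^{2p}$, and likewise $\tau^p(z)=0$ recovers that $z$ is $p$-harmonic.

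The hard part will be the remaining, first-order, requirement of horizontal conformality $\kappa(z,z)=0$, which in Theorem \ref{theorem-(p,q)} is a genuine necessary condition (it is the vanishing of the coefficient $c_{2p,0}^p=\kappa(z,z)^p$ of $\partial_z^{2p}f$, forced by testing against holomorphic $f$). Here the inversion gives no leverage, and a direct computation from $\kappa(F_j,F_k)=\delta_{jk}/|x|^4$ yields
$$\kappa(z,z)=\sum_{j,k=1}^{2p}a_ja_k\,\kappa(F_j,F_k)=\frac{1}{|x|^{4}}\sum_{j=1}^{2p}a_j^2.$$
Thus the whole obstruction concentrates in the scalar $\sum_j a_j^2$: the tower conditions are automatic, but horizontal conformality holds exactly when this quantity vanishes. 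I would therefore expect the completion of the proof to hinge entirely on reconciling the characterisation with coefficient vectors for which $\sum_j a_j^2\neq0$, and I would flag the conformality step as the one demanding the most care, since it is precisely the point at which an unrestricted $a$ interacts with the necessary condition $\kappa(z,z)=0$.
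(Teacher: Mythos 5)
First, a point of order: the paper does \emph{not} prove this statement. It is Conjecture \ref{conjecture-1}, explicitly left open (``the statement seems to be difficult to prove in general''), supported only by machine verification for $p=1,2,3,4$. So there is no paper proof to compare you with; your attempt has to be judged on its own terms.

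On those terms, the positive half of your argument is correct and is a genuine advance over what the paper offers. The factorisation $z^r\bar z^{\,s}=(\ell^r\bar\ell^{\,s})\circ i_p$, the polyharmonic Kelvin identity $\tau^p(u\circ i_p)=|x|^{-4p}\,(\tau^p u)\circ i_p$ in dimension $n=2p$ (classical: the usual weight $|x|^{2p-n}$ trivialises exactly when $n=2p$), and the degree count $r+s\le p+(p-1)=2p-1<2p$ together kill the entire tower $\tau^p(z^r\bar z^{\,s})=0$ of Theorem \ref{theorem-(p,q)} uniformly in $a$ — conceptually, where the authors could only check $p\le 4$ symbolically. One caveat: your claim that the Kelvin identity follows ``exactly as in'' Theorem \ref{theorem-inversion} undersells the work. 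Lemma \ref{lemma-Laplacian} computes $\tau$ only of the maps $x\mapsto x/|x|^n$ themselves, not of a general composition $u\circ i_p$; the induction on $p$ needs an additional ingredient such as $\tau(|x|^s v)=|x|^s\tau(v)+2s\,|x|^{s-2}\sum_j x_j\,\partial v/\partial x_j+s(s+2p-2)\,|x|^{s-2}v$, so this step should be proved or cited properly, not asserted by analogy.

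The genuine problem is your final step, and it is not merely ``the one demanding the most care'' — it is fatal to the statement as written, and you should have said so. Your computation $\kappa(z,z)=|x|^{-4}\sum_j a_j^2$ is correct, and by the paper's own Theorem \ref{theorem-(p,q)} (for $p=1$, by classical Fuglede--Ishihara, Theorem \ref{theorem-(1,1)}) the condition $\kappa(z,z)=0$ is \emph{necessary}. Hence for any non-isotropic $a$ the function $z$ is not a $(p,p)$-harmonic morphism, and no proof of the conjecture ``for any element $a\in\cn^{2p}$'' can exist. A concrete witness: take $p=1$, $a=(1,0)$, so $z=x_1/|x|^2$ on $\rn^2\setminus\{0\}$; then $z$ is harmonic but
$$\tau(z^2)=2z\,\tau(z)+2\,\kappa(z,z)=\frac{2}{|x|^4}\neq 0,$$
so $z$ pulls the harmonic function $\Re(w^2)$ back to a non-harmonic one. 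What your argument actually establishes, once the Kelvin identity is properly proved, is sharp and worth stating as the conclusion: $z=\sum_j a_jF_j$ is a $(p,p)$-harmonic morphism \emph{if and only if} $\sum_j a_j^2=0$, the tower conditions being automatic for every $a$ and the isotropy condition carrying the entire content — exactly the kind of condition the authors themselves impose in Example \ref{example-first}. Deferring this as a step still to be ``reconciled'' misdescribes the situation: there is nothing left to reconcile, only a hypothesis missing from the conjecture.
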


Our rather extensive computer calculations show that this Conjecture \ref{conjecture-1} is true in the cases when $p=1,2,3,4$, but the statement seems to be difficult to prove in general.
\medskip

No proper $(2,1)$-harmonic morphism is known to exist from the three dimensional Euclidean spaces $\rn^{3}$, not even locally.  For this we have the following.

\begin{conjecture}
Let $p\ge 2$ and $\phi:U\to\cn$ be a complex-valued $(p,1)$-harmonic morphism defined locally on the standard Euclidean space $\rn^{2p-1}$.  Then $\phi$ is a $(1,1)$-harmonic morphism i.e. $\tau(\phi)=0$.
\end{conjecture}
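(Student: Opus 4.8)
The plan is to read the hypothesis through the characterisation of Theorem~\ref{theorem-(p,q)} and then to upgrade its weakest conclusion. By that theorem, saying that $\phi:U\subset\rn^{2p-1}\to\cn$ is a $(p,1)$-harmonic morphism is equivalent to
$$\kappa(\phi,\phi)=0,\qquad \tau^p(\phi)=\tau^p(\phi^2)=\dots=\tau^p(\phi^p)=0.$$
Since $\kappa(\phi,\phi)=0$ already holds, Theorem~\ref{theorem-(1,1)} tells us that it suffices to prove the single strong identity $\tau(\phi)=0$. The whole difficulty is thus to bootstrap from $\tau^p(\phi)=0$ to $\tau(\phi)=0$, using the remaining equations $\tau^p(\phi^k)=0$ for $2\le k\le p$ and, crucially, the parity of the dimension $2p-1$.

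First I would package the $p$ equations into one generating identity. Writing $T=\tau(\phi)$ and using $\kappa(\phi,\phi)=0$ with (\ref{equation-basic}), one gets $\tau(e^{t\phi})=t\,T\,e^{t\phi}$ and, inductively, $\tau^j(e^{t\phi})=e^{t\phi}P_j(t)$ where $P_0=1$ and
$$P_{j+1}(t)=\tau\bigl(P_j(t)\bigr)+t\bigl[T\,P_j(t)+2\,\kappa(\phi,P_j(t))\bigr].$$
As $\tau^p(e^{t\phi})=\sum_{k\ge0}\tfrac{t^k}{k!}\tau^p(\phi^k)$, the hypotheses are exactly equivalent to $P_p(t)\equiv0\bmod t^{p+1}$, i.e.\ the coefficients of $t,\dots,t^p$ in $P_p$ vanish. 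This produces an explicit hierarchy of differential relations on $T$: its first member is $\tau^{p-1}(T)=\tau^p(\phi)=0$, while the later members couple $T$ to $\kappa(\phi,T)$ and to the intermediate fields $\tau^j(\phi)$, $1\le j\le p-1$.

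In parallel I would record what $T=0$ means geometrically. On the open set where $d\phi\ne0$, write $\phi=u+iv$, set $\lambda=|\nabla u|=|\nabla v|$, and choose an orthonormal frame with $e_1=\nabla u/\lambda$, $e_2=\nabla v/\lambda$ completed by a frame $e_3,\dots,e_{2p-1}$ tangent to the fibres. The symmetry of the Hessians of $u$ and $v$ forces $\langle\nabla_{e_2}e_2,e_1\rangle=e_1\ln\lambda$ and $\langle\nabla_{e_1}e_1,e_2\rangle=e_2\ln\lambda$, and with these the Laplacian collapses to
$$\tau(\phi)=-\,d\phi(\mathcal H),\qquad \mathcal H=\sum_{a=3}^{2p-1}\bigl(\langle\nabla_{e_a}e_a,e_1\rangle\,e_1+\langle\nabla_{e_a}e_a,e_2\rangle\,e_2\bigr).$$
Here $\mathcal H$ is the mean-curvature vector of the $(2p-3)$-dimensional fibres, so the target identity $\tau(\phi)=0$ is equivalent to the fibres being minimal in $\rn^{2p-1}$; the hierarchy of the previous paragraph must therefore be shown to force minimality.

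The decisive input should be the parity of the dimension. The proper examples throughout the paper are manufactured from the inversion $i_p$ of $\rn^{2p}$, which by Theorem~\ref{theorem-inversion} and Lemma~\ref{lemma-Laplacian} is proper $p$-harmonic only because the telescoping product $2(2-2p)\cdots 2p(2p-2p)$ picks up a zero factor exactly at the $p$-th step. In the odd dimension $2p-1$ the analogous product $\prod_{j\ge1}2j\,(2j-(2p-1))$ never vanishes, since $2j=2p-1$ is impossible, so the radial model is polyharmonic of no order whatsoever. I would attempt to convert this arithmetic obstruction into an analytic one: eliminate the auxiliary fields $\tau^j(\phi)$ from the hierarchy to obtain a closed overdetermined elliptic system for the scalar $T$ on $\rn^{2p-1}$, and then show that in odd ambient dimension its only solution is $T\equiv0$, for instance through a Bochner-type identity for $T$ whose zeroth-order term carries a definite-signed factor of the shape $(2p-1-2k)$. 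The hard part will be precisely this last step, because the equations of Theorem~\ref{theorem-(p,q)} do not manifestly see the dimension: one must first isolate the hidden identity---morally the odd-dimensional shadow of the vanishing factor $2p(2p-2p)$---and simultaneously control the intermediate tension fields $\tau^j(\phi)$, $1\le j\le p-1$, which enter as coefficients in $P_p$ and carry no a priori sign. This coupling is presumably why the statement is only conjectured; an induction on $p$ (assuming the result in dimension $2(p-1)-1$ and descending by slicing along the fibres) is the natural but not obviously successful fallback.
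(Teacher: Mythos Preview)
The statement you are attempting to prove is not a theorem in the paper but one of the two open \emph{conjectures} with which the authors conclude (Section~9). There is no proof in the paper to compare against; the authors merely record the observation that no proper $(2,1)$-harmonic morphism is known from $\rn^3$ and formulate the general expectation.

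Your proposal is accordingly not a proof but a programme, and you say so yourself: the passage from the hierarchy $P_p(t)\equiv 0\bmod t^{p+1}$ to $T=\tau(\phi)=0$ is left as ``the hard part'', to be achieved via an unspecified Bochner-type identity whose zeroth-order coefficient is hoped to carry a sign because $2p-1$ is odd. This is the entire content of the conjecture, and nothing in your outline bridges it. The generating-function repackaging and the Baird--Eells identification $\tau(\phi)=-d\phi(\mathcal H)$ are correct and standard, but they only restate the hypotheses and the conclusion; the analogy with Lemma~\ref{lemma-Laplacian} is suggestive heuristics, not an argument, since the radial model is one very special map and the equations of Theorem~\ref{theorem-(p,q)} do not, as you note, see the ambient dimension directly. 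Your proposed induction on $p$ by ``slicing along the fibres'' is also only a gesture: restricting a horizontally conformal map to a hyperplane does not in general yield a horizontally conformal map, so there is no obvious descent mechanism. In short, the gap is exactly the conjecture itself, and your write-up is an honest survey of why it is hard rather than a proof.
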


\section{Acknowledgements}

The authors would like to thank the referee for several useful remarks that have improved the presentation. The first author would like to thank the Department of Mathematics at Lund University for its great hospitality during her time there as a postdoc.

\end{document}